\documentclass[12pt]{amsart}
\usepackage[
    colorlinks=false,          
    linkbordercolor={1 0 0},   
    citebordercolor={0 1 0},   
    urlbordercolor={0 0 1}     
]{hyperref}

\usepackage{mathtools}
\usepackage{xcolor}
\usepackage{mdwlist}
\usepackage{float}
\allowdisplaybreaks
\usepackage{longtable}
\usepackage{amsfonts,amssymb,amsmath,textcomp}
\usepackage{enumitem}

\usepackage{tikz}
\usepackage{enumitem}

\usepackage[justification=centering]{caption}
\usepackage{supertabular}
\setlength{\textheight}{224mm}
\textwidth=15cm
\hoffset=-1.6cm
\setlength{\headheight}{8pt}   
\setlength{\headsep}{18pt}      
 2

\theoremstyle{plain}
\newtheorem{thm}{Theorem}[section]

\newtheorem{prop}[thm]{Proposition}
\newtheorem{cor}[thm]{Corollary}
\newcommand{\thmref}[1]{Theorem~\ref{#1}}

\newcommand{\propref}[1]{Proposition~\ref{#1}}

\theoremstyle{definition}

\newcommand{\rmkref}[1]{Remark~\ref{#1}}
\newtheorem{rmk}{Remark}

\numberwithin{equation}{section}
\begin{document}
\baselineskip 5.1mm
\title[Discriminants and Large P\'olya Groups in Septic Number Fields]
{Discriminants and Large P\'olya Groups in Septic Number Fields}
\author{Nimish Kumar Mahapatra}
\address[Nimish Kumar Mahapatra]{Indian Institute of Science Education and Research Thiruvananthapuram, India.}
\email{nimish@iisertvm.ac.in}

\subjclass[2020]{11R09; 11R29}

\date{\today}

\keywords{P\'olya group; P\'olya field; Discriminant; Septic Polynomial.}

\begin{abstract}
We investigate a new family of cyclic septic fields $\{K_t\}_{t\in\mathbb{Z}}$ arising from the Hashimoto--Hoshi construction. For this family, we compute the discriminant explicitly and characterize their P\'olya property under the condition that the polynomial $E(t) = t^{6} + 2t^{5} + 11t^{4} + t^{3} + 16t^{2} + 4t + 8$ takes fifth-power free values. We show that this family contains infinitely many non-P\'olya fields for which the cardinality of the P\'olya group is unbounded. We also establish that, assuming Bunyakovsky's conjecture for $E(t)$, this family contains infinitely many P\'olya fields. We further show that, for any fixed positive integer $m$, there exist infinitely many blocks of $m$ consecutive fields in this family whose cardinality of the P\'olya groups can be made arbitrarily large. Finally, we demonstrate that infinitely many fields in this family are non-monogenic with field index one.
\end{abstract}

\maketitle{}

\section{Introduction}

Let $K$ be a number field with ring of integers $\mathcal{O}_K$. The ring of integer-valued polynomials over $\mathcal{O}_K$ is defined as
$$\text{Int}(\mathcal{O}_K) = \{f \in K[X] \mid f(\mathcal{O}_K) \subseteq \mathcal{O}_K\}.$$
A number field $K$ is called a P\'olya field if the $\mathcal{O}_K$-module $\text{Int}(\mathcal{O}_K)$ admits a regular basis, that is, a basis $(f_n)_{n \geq 0}$ where $\deg(f_n) = n$ for all $n \in \mathbb{N} \cup \{0\}$ (see \cite{ZAN}).

For each positive integer $n$, let $\mathfrak{J}_n(K)$ denote the fractional ideal of $\mathcal{O}_K$ generated by the leading coefficients of all polynomials in $\text{Int}(\mathcal{O}_K)$ of degree $n$, together with $0$. A fundamental result establishes that a number field $K$ is a P\'olya field if and only if $\mathfrak{J}_n(K)$ is principal for all $n \geq 1$ (see  \cite{PJC}). This characterization immediately shows that every number field with a trivial class group is a P\'olya field. However, the converse does not hold in general, since every cyclotomic field is a P\'olya field (see  \cite{ZAN}), yet many have non-trivial class groups.

The obstruction to a number field $K$ being P\'olya is measured by its P\'olya group. Let $\text{Cl}(K)$ denote the class group of $K$. For each $n \geq 1$, write 
$[\mathfrak{J}_n(K)]$ for the class of $\mathfrak{J}_n(K)$ in $\text{Cl}(K)$. 
The P\'olya group $\text{Po}(K)$ is then defined as the subgroup of $\text{Cl}(K)$ generated by the classes $[\mathfrak{J}_n(K)]$ for all $n \geq 1$. Thus, $K$ is a P\'olya field if and only if $\text{Po}(K) = \{1\}$.

The systematic classification of P\'olya fields is a central problem in algebraic number theory. Significant progress has been made for low-degree extensions: Zantema \cite{ZAN} completely characterized quadratic P\'olya fields, while Leriche \cite{AL2} provided a complete classification of Galois cubic P\'olya fields. Leriche's work also encompasses cyclic quartic and sextic fields \cite[Theorems 5.1, 6.2]{AL2}. In contrast, the study of how large P\'olya groups can become across different families of number fields has received relatively little attention in the mathematical literature. Only a handful of works have addressed questions about the largeness of P\'olya groups, including studies in biquadratic fields \cite{JAI} and in simplest cubic fields \cite{JAI2}; see also \cite{JAI3}. In recent work, the author and Pandey \cite{NKM} have investigated Lehmer quintic fields with large P\'olya groups. Lehmer quintic fields arise from Lehmer quintic polynomials, which are a ``simple'' parametric family of polynomials discovered by Emma Lehmer \cite{LEH} via explicit transformations of Gaussian periods. Specifically, ``simple" families are characterized by monic polynomials with integer coefficients and constant term 1, whose splitting fields are cyclic over $\mathbb{Q}$. Similar polynomial constructions have also been developed for cubic, quartic, and sextic cases (see \cite{LEH}). Interestingly, no such “simple” families are currently known in degree $7$ or higher. Moreover, to the best of our knowledge, no infinite family of cyclic septic fields admitting a complete and explicit determination of their Pólya groups has previously appeared in the literature. One of the main goals of the present work is to fill this gap.

Hashimoto and Hoshi \cite{HH} adapted Lehmer's ideas in a more general framework to construct a new parametric family of cyclic septic polynomials over $\mathbb{Q}$. The Hashimoto--Hoshi family of cyclic septic polynomials with parameter $t\in \mathbb{Z}$ is defined as
\begin{equation}
f_t(X) = X^7+a_6(t)X^6+a_5(t)X^5+a_4(t)X^4+a_3(t)X^3+a_2(t)X^2+a_1(t)X+a_0(t),    
\end{equation}
where 
\begin{align*}
    a_6(t)= & \: -(t^3 + t^2 + 5 t + 6)\\
    a_5(t)= & \:3(3 t^3 + 3 t^2 + 8 t + 4)\\
    a_4(t)= & \:(t^7 + t^6 + 9 t^5 - 5 t^4 - 15 t^3 - 22 t^2 - 36 t - 8)\\
    a_3(t)= & \:-t (t^7 + 5 t^6 + 12 t^5 + 24 t^4 - 6 t^3 + 2 t^2 - 20 t - 16)\\
    a_2(t)= & \:t^2 (2 t^6 + 7 t^5 + 19 t^4 + 14 t^3 + 2 t^2 + 8 t - 8)\\
    a_1(t)= & \:-t^4 (t^4 + 4 t^3 + 8 t^2 + 4)\\
    a_0(t)= & \:t^7.
\end{align*}
Although this family is not ``simple'' in the sense described above, since its constant term is $t^7$, it nevertheless offers an interesting class for further study.

Let $\theta_t$ be a root of the polynomial $f_t(X)$ and let $K_t = \mathbb{Q}(\theta_t)$ denote the corresponding number field. Since $[K_t:\mathbb{Q}] = 7$, we refer to the collection $\{K_t\}_{t\in\mathbb{Z}}$ as the Hashimoto--Hoshi family of cyclic septic fields. This article provides a complete characterization of the P\'olya property of this family. A key feature of our approach is that the arithmetic of the Pólya groups in the
family we study is governed by the values of a single sextic polynomial
\[
E(t)=t^6+2t^5+11t^4+t^3+16t^2+4t+8,
\]
whose prime divisors simultaneously control the conductor, the discriminant,
and the ambiguous ideal classes. As a result, the problem of constructing cyclic
septic fields with large Pólya groups is reduced to the study of power--free values
and the distribution of prime divisors of $E(t)$. Our main result in this article is the following theorem.
\begin{thm}\label{T1}
Let $\{K_t\}_{t\in\mathbb{Z}}$ be the family of Hashimoto--Hoshi cyclic septic fields and define $E(t) \;=\; t^{6} + 2t^{5} + 11t^{4} + t^{3} + 16t^{2} + 4t + 8$.
Assume that $E(t)$ is fifth-power free, and let $\omega(z)$ denote the number of distinct prime divisors of an integer $z$. Then:
\begin{enumerate}
    \item The P\'olya group of $K_t$ is
    \[
    \text{Po}(K_t) \;\cong\; 
    \begin{cases}
    (\mathbb{Z}/7\mathbb{Z})^{\,\omega(E(t)) - 2}, & \text{ if $t$ is even},\\[6pt]
    (\mathbb{Z}/7\mathbb{Z})^{\,\omega(E(t)) - 1}, & \text{ if $t$ is odd}.
    \end{cases}
    \]
    In particular, $K_t$ is a P\'olya field precisely when 
    \[
    \omega(E(t))=
    \begin{cases}
    2, & \text{if $t$ is even},\\
    1, & \text{if $t$ is odd}.
    \end{cases}
    \]
    \item The family $\{K_t\}_{t\in\mathbb{Z}}$ contains infinitely many non-P\'olya fields 
    with large P\'olya groups. In particular, the $7$--rank of their class groups is unbounded.
\end{enumerate}
\end{thm}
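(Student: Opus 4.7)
The plan splits along the two parts of the theorem: Part (1) reduces to Chevalley's ambiguous class number formula after an explicit ramification analysis, while Part (2) combines Part (1) with a CRT-plus-sieve construction.

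For Part (1), I would begin with an explicit computation of $\mathrm{disc}(f_t)$ as a polynomial in $t$ (for example via $\mathrm{disc}(f_t) = (-1)^{21}N_{K_t/\mathbb{Q}}(f_t'(\theta_t))$, or as a resultant), and then factor the result over $\mathbb{Z}[t]$. I expect the outcome to take the form $\mathrm{disc}(f_t) = \pm 2^{a}\,E(t)^{6}$ for some small constant $a$. Combined with the conductor-discriminant formula $\mathrm{disc}(K_t) = \mathfrak{f}^{6}$ valid for any cyclic septic, and with the fifth-power-free hypothesis on $E(t)$, this should pin down the radical of the conductor $\mathfrak{f}$ away from the prime $2$. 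A separate local analysis at $p = 2$ (via the Newton polygon of $f_t$, Dedekind's criterion, or a direct factorization of $f_t$ over $\mathbb{Z}_{2}[X]$) should then show that $2$ is unramified in $K_t$ for every $t$. Consequently the set of ramified primes of $K_t/\mathbb{Q}$ equals the set of odd prime divisors of $E(t)$, irrespective of parity, so the number $g$ of ramified primes equals $\omega(E(t)) - 1$ when $t$ is even (since then $2 \mid E(t)$) and $\omega(E(t))$ when $t$ is odd.

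Since $K_t/\mathbb{Q}$ is cyclic of prime degree $7$, each ramified prime $q$ is totally ramified, so $q\mathcal{O}_{K_t} = \mathfrak{P}_q^{7}$; the classes $[\mathfrak{P}_q]$ generate both the P\'olya group (by the Ostrowski description for Galois extensions) and the ambiguous ideal class group $\mathrm{Cl}(K_t)^{G}$, and the two coincide for cyclic prime-degree extensions of $\mathbb{Q}$. Since $-1$ is a norm from any odd-degree cyclic extension, Chevalley's ambiguous class number formula reduces to $|\mathrm{Cl}(K_t)^{G}| = 7^{g-1}$, and since every element of $\mathrm{Po}(K_t)$ has order dividing $7$, we conclude $\mathrm{Po}(K_t) \cong (\mathbb{Z}/7\mathbb{Z})^{g-1}$. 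Substituting the values of $g$ according to the parity of $t$ produces both claimed isomorphisms and the P\'olya field criterion.

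For Part (2), fix $N \in \mathbb{N}$ and aim to exhibit infinitely many $t$ with $\omega(E(t)) \geq N$ and $E(t)$ fifth-power free. Since $E$ is a nonconstant integer polynomial, infinitely many primes $p$ (in particular, disjoint from $\{2, 7\}$ and from the prime divisors of $\mathrm{disc}(E)$) admit a simple root $a_{p}$ of $E$ modulo $p$. Choose $N$ such primes $p_{1}, \ldots, p_{N}$ and, for each, lift $a_{i}$ to a residue modulo $p_{i}^{2}$ at which $p_{i}$ exactly divides $E$. By the Chinese Remainder Theorem, the set of $t$ lying in the prescribed residue class modulo $p_{1}^{2}\cdots p_{N}^{2}$ forms an arithmetic progression in which $p_{i}$ exactly divides $E(t)$ for each $i$. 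Within this progression, a Hooley-type sieve for power-free values of polynomials controls the density of $t$ such that $q^{5} \mid E(t)$ for some prime $q$ (the exponent $5$ and degree $6$ lie within the unconditional reach of such sieves), so positively many $t$ in the progression satisfy both $p_{1}\cdots p_{N} \mid E(t)$ and $E(t)$ fifth-power free. Applying Part (1) to each such $t$ gives $\mathrm{Po}(K_t) \supseteq (\mathbb{Z}/7\mathbb{Z})^{N-2}$, so letting $N \to \infty$ establishes the unboundedness of the $7$-rank of $\mathrm{Cl}(K_t)$.

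The principal obstacle is the explicit factoring of the high-degree polynomial $\mathrm{disc}(f_t)$ together with the $2$-adic (and $7$-adic) local analysis needed in Part (1); these must produce exactly the exponents predicted by the conductor-discriminant formula, with careful bookkeeping across the parity of $t$. In Part (2), while Hooley's method supplies the needed sieve input, the CRT congruences must be arranged so that the sieve's error terms remain negligible as the modulus grows with $N$.
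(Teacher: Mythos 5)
Your high-level skeleton for Part (1) (count the ramified primes, then apply the ambiguous class number formula, then note that every nontrivial ambiguous class has order $7$) matches the paper, which uses Chabert's corollary $|\mathrm{Po}(K_t)|=\tfrac{1}{7}\prod_p e_p$ and Zantema's description of $\mathrm{Po}(K)$ by ambiguous ideals. But there is a genuine gap in how you propose to identify the ramified primes. First, the expected shape $\operatorname{disc}(f_t)=\pm 2^{a}E(t)^{6}$ is wrong: the actual factorization is $\operatorname{disc}(f_t)=t^{22}E(t)^{6}\bigl(2t^{4}-2t^{3}+6t^{2}-3t+4\bigr)^{2}\bigl(t^{5}+t^{4}+t^{3}+2t^{2}+t+1\bigr)^{2}$, so you must also rule out ramification at primes dividing the extra factors. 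Second, and more seriously, the relation $\operatorname{disc}(f_t)=I(\theta_t)^{2}\,\mathfrak{f}^{6}$ together with fifth-power-freeness of $E(t)$ does \emph{not} pin down the conductor: for a prime $q$ with $v_q(E(t))=1$ and $q$ prime to the other factors, the equation $6=2v_q(I)+6v_q(\mathfrak{f})$ admits both $(v_q(I),v_q(\mathfrak{f}))=(0,1)$ and $(3,0)$, so the discriminant bookkeeping alone cannot tell whether $q$ ramifies or is absorbed into the index. The paper closes exactly this gap with the Spearman--Williams conductor criterion applied to a reduced defining polynomial $g_t$, an explicit B\'ezout identity $EL+HM=2^{5}\cdot 7^{7}$ to control $\gcd(E,H)$, a $7$-adic analysis showing $\alpha=2$ precisely when $t\equiv 2\pmod 7$, and a key proposition (via the identity $\Phi_7(w(x))=E(x)W(x)/(2^{6}7^{6})$) showing that every prime $p>7$ dividing $E(t)$ satisfies $p\equiv 1\pmod 7$. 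Your proposal has no substitute for this local input, so Part (1) as written does not go through.

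For Part (2) your route (CRT to force $N$ prescribed primes to divide $E(t)$, then a Hooley-type sieve for fifth-power-free values inside the resulting progression) differs from the paper's, which restricts to $t=7s+2$, applies Erd\H{o}s's theorem and Reuss's theorem to get a positive proportion of primes $p$ with $E(7p+2)$ fifth-power free, and then invokes Halberstam's normal-order result $\omega(f(p))=(1+o(1))\log\log X$ to make $\omega$ large. Your variant is plausible but, as you acknowledge, requires the sieve to be uniform in a modulus that grows with $N$; the paper's combination avoids this entirely because the largeness of $\omega$ comes for free on a density-one set of primes rather than being engineered by congruences. Note also that the paper's choice $t\equiv 2\pmod 7$ is what guarantees non-P\'olya outright (since then $E(t)=7^{2}\psi(s)$ can never be a prime power), a point your construction gets only for $N$ large.
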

We define a collection of $m$ Hashimoto–Hoshi cyclic septic fields to be consecutive if they are of the form $K_{t+i}$ for $i = 0, 1, \ldots, m-1$ and some integer $t$. In \cite{JAI2}, the authors investigated the largeness of P\'olya groups occurring in certain consecutive biquadratic fields. Motivated by this work, we study the same question for consecutive fields in the Hashimoto--Hoshi collection of cyclic septic fields. Our result establishes the following.
\begin{thm}\label{T3}
Let \(m\ge 2\) and \(r>1\) be fixed integers. Then there exist infinitely many blocks of
\(m\) consecutive integers \(t,t+1,\dots,t+m-1\) such that
\[
\text{Po}(K_{t+i})\supseteq (\mathbb{Z}/7\mathbb{Z})^{\,r}
\qquad\text{for every }i=0,1,\dots,m-1.
\]
Equivalently, all \(m\) consecutive P\'olya groups in the block have $7$--rank at least \(r\).
\end{thm}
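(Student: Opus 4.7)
By Theorem~\ref{T1}, whenever $E(t+i)$ is fifth-power free,
\[
\text{Po}(K_{t+i})\;\cong\;(\mathbb{Z}/7\mathbb{Z})^{\omega(E(t+i))-\varepsilon_{i}},\qquad \varepsilon_{i}\in\{1,2\},
\]
so in both parities the inclusion $\text{Po}(K_{t+i})\supseteq(\mathbb{Z}/7\mathbb{Z})^{r}$ follows from the single inequality $\omega(E(t+i))\ge r+2$. The theorem therefore reduces to producing infinitely many $t\in\mathbb{Z}$ such that, for every $i=0,1,\dots,m-1$, the integer $E(t+i)$ has at least $r+2$ distinct prime factors and is fifth-power free. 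The plan is a Chinese Remainder Theorem construction, in the spirit of the consecutive-biquadratic argument of \cite{JAI2}, refined by a short sieve over higher prime powers.

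\textbf{CRT skeleton.} Since $E$ is non-constant, Chebotarev applied to the splitting field of $E$ yields a positive density of primes $p\nmid 2\,\mathrm{disc}(E)$ for which $E$ admits a simple root $b\bmod p$. Pick $m(r+2)$ pairwise distinct such primes $p_{i,j}$ ($0\le i\le m-1$, $1\le j\le r+2$), each with a simple root $b_{i,j}$ and Hensel lift $\widetilde b_{i,j}\bmod p_{i,j}^{2}$. Impose
\[
t+i\equiv b_{i,j}\pmod{p_{i,j}},\qquad t+i\not\equiv \widetilde b_{i,j}\pmod{p_{i,j}^{2}}.
\]
A Taylor expansion of $E$ at $b_{i,j}$ forces $p_{i,j}\mid E(t+i)$ but $p_{i,j}^{2}\nmid E(t+i)$, so $\omega(E(t+i))\ge r+2$ and no $p_{i,j}^{5}$ divides $E(t+i)$. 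By CRT the system is consistent and its solution set is a nonempty union of arithmetic progressions modulo $M:=\prod_{i,j}p_{i,j}^{2}$.

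\textbf{Power-free sieve.} Within this union one still has to eliminate the $t$ for which some auxiliary prime $q\notin\{p_{i,j}\}$ contributes $q^{5}\mid E(t+i)$. By Hensel's lemma, for $q\nmid \mathrm{disc}(E)$ the number of residues modulo $q^{5}$ at which $E$ vanishes is at most $\deg E=6$, so the density of $t$ with $q^{5}\mid E(t+i)$ is at most $6/q^{5}$; summed over $i$ and over $q>Q_{0}$ the bad density is bounded by $6m\sum_{q>Q_{0}}q^{-5}$, which is arbitrarily small for $Q_{0}$ large. For each of the finitely many primes $q\le Q_{0}$ not already among the $p_{i,j}$, impose one further congruence $t\equiv t_{q}\pmod{q^{5}}$ in a residue class on which no $E(t+i)$ is divisible by $q^{5}$; such classes exist because $E$ does not vanish identically modulo $q^{5}$. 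The refined CRT system therefore cuts out a positive-density subset of $\mathbb{Z}$ on which both conditions hold.

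Applying Theorem~\ref{T1} term-by-term gives $\text{Po}(K_{t+i})\supseteq(\mathbb{Z}/7\mathbb{Z})^{r}$ for every $i$ and every $t$ in this subset, producing infinitely many valid blocks. The principal obstacle is the simultaneous fifth-power free condition across all $m$ shifts $E(t+i)$ inside a tightly constrained arithmetic progression: the large-prime tail is tamed by the convergence of $\sum_{q}q^{-5}$, but the small exceptional primes dividing $2\,\mathrm{disc}(E)$ (which also determine $\varepsilon_{i}$) require direct verification, and ensuring mutual compatibility between the chosen primes $p_{i,j}$, the small sieving primes, and the parity behaviour of Theorem~\ref{T1} is the technical heart of the argument.
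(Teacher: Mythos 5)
Your CRT skeleton is essentially the paper's argument (the paper uses Schur's lemma to produce disjoint sets of prime divisors of the shifted polynomials $E(2k+j)$ and solves the resulting congruences), and your device of prescribing $t+i$ modulo $p_{i,j}^2$ to force $p_{i,j}\parallel E(t+i)$ is a careful touch the paper does not bother with. The problem is the second half. You reduce the theorem to finding infinitely many $t$ for which \emph{every} $E(t),\dots,E(t+m-1)$ is fifth-power free, and in the ``power-free sieve'' you assert that for each small prime $q$ there is a residue class $t\equiv t_q \pmod{q^5}$ on which no $E(t+i)$ is divisible by $q^5$, ``because $E$ does not vanish identically modulo $q^5$.'' That justification only shows each bad set $B_i=\{t \bmod q^5 : q^5\mid E(t+i)\}$ is a proper subset; it does not show $\bigcup_{i=0}^{m-1}B_i\neq \mathbb{Z}/q^5\mathbb{Z}$. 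If $E$ has any root modulo $q^5$ and $m\ge q^5$, the union of the $m$ translates \emph{is} all of $\mathbb{Z}/q^5\mathbb{Z}$, and no such class exists. This is not hypothetical: the paper's first Remark after Theorem~\ref{T1} records that $29^5\mid E(149396)$, so for $m\ge 29^5$ every block of $m$ consecutive integers contains some $n$ with $E(n)$ not fifth-power free. Your reduction is therefore to a statement that is false for large $m$, and the theorem is claimed for every fixed $m\ge 2$.

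The fix is to delete the sieve rather than repair it: global fifth-power freeness is never needed for the containment $\mathrm{Po}(K_{t+i})\supseteq(\mathbb{Z}/7\mathbb{Z})^r$. The conductor formula of Theorem~\ref{T2} is unconditional, every prime $>7$ dividing a value of $E$ is $\equiv 1\pmod 7$ by Proposition~\ref{2prop1}, and $\mathrm{Po}(K_{t+i})$ is elementary $7$-abelian of order $\tfrac17\prod_p e_p$ by Proposition~\ref{CHABERT}; hence its $7$-rank is (number of ramified primes)$-1$, and a prime $q$ with $v_q(E(t+i))=1$ necessarily divides the conductor and ramifies. Your mod-$p_{i,j}^2$ conditions already guarantee exactly this for $r+2$ primes per shift, so the CRT skeleton alone finishes the proof. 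Note that the published proof quotes the isomorphism of Theorem~\ref{T1} (whose hypothesis is fifth-power freeness) and imposes only $p\mid H_j(k)$, without controlling $v_p$; routing the conclusion through Theorem~\ref{T2} as above, with your exact-divisibility conditions, is actually the cleaner way to make the containment airtight.
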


The article is organized as follows. Section 2 presents the necessary preliminaries for establishing our main result. Section 3 provides an explicit computation of the discriminant for the Hashimoto--Hoshi family of cyclic septic fields. Section 4 contains all the proofs of our main theorems. Section 5 examines the notion of P\'olya numbers and the monogeneity properties of this family. In Section 6, we present the concluding remarks and discuss their implications for a long-standing conjecture. Finally, the appendix includes the \textsc{Maple} scripts used to verify the algebraic identities and the polynomial computations presented in this paper.

\section{Preliminaries}

Let $K/\mathbb{Q}$ be a cyclic extension of degree $p$, where $p$ is an odd prime, defined by a monic polynomial of degree $p$ in $\mathbb{Z}[X]$. The following theorem of Spearman and Williams \cite{BKS} provides explicit descriptions for the conductor $f(K)$ and discriminant $d(K)$.

\begin{thm}\cite[Theorem 1]{BKS}\label{CT1} 
Let 
$f(X) = X^p + a_{p-2}X^{p-2} + \cdots + a_1X + a_0 \in \mathbb{Z}[X]$
be such that
\begin{enumerate}
\item $\operatorname{Gal}(f) \simeq \mathbb{Z}/p\mathbb{Z}$, and
\item there does not exist a prime $q$ such that 
\[
q^{\,p-i} \mid a_i, \quad i=0,1,\ldots,p-2.
\]
\end{enumerate}
Let $\theta \in \mathbb{C}$ be a root of $f(X)$ and set $K=\mathbb{Q}(\theta)$ so that 
$K$ is a cyclic extension of $\mathbb{Q}$ with $[K:\mathbb{Q}]=p$. Then
\[
d(K) = f(K)^{p-1},
\]
where the conductor $f(K)$ of $K$ is given by
\[
f(K) = p^\alpha \prod_{\substack{q \equiv 1 \pmod{p} \\ q \mid a_i,\; i=0,1,\ldots,p-2}} q,
\]
where $q$ runs through primes, and
\[
\alpha =
\begin{cases}
0, &\text{if \quad} p^{p(p-1)} \nmid \operatorname{disc}(f)
      \text{ and } p \mid a_i,\; i=1,\ldots,p-2 \text{ does not hold, or},\\
   & \qquad p^{p(p-1)} \mid \operatorname{disc}(f)
      \text{ and } p^{\,p-1}\parallel a_0,\; p^{\,p-1} \mid a_1,\; p^{\,p+1-i} \mid a_i,\\
   &\qquad\qquad\qquad\qquad\text{\:\:\:\:and } \; i=2,\ldots,p-2
      \text{ does not hold},\\[6pt]
2, &\text{if \quad} p^{p(p-1)} \nmid \operatorname{disc}(f)
      \text{ and } p \mid a_i,\; i=1,\ldots,p-2 \text{ holds, or},\\
   &\qquad p^{p(p-1)} \mid \operatorname{disc}(f)
      \text{ and } p^{\,p-1}\parallel a_0,\; p^{\,p-1} \mid a_1,\; p^{\,p+1-i} \mid a_i,\\
   &\qquad\qquad\qquad\qquad\text{\:\:\:\:and } \; i=2,\ldots,p-2
      \text{ holds}.
\end{cases}
\]

\end{thm}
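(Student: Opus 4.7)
The plan is to combine the conductor--discriminant formula for abelian extensions with a local analysis of ramification, treating primes $q\neq p$ and the prime $p$ by separate methods. Since $K/\mathbb{Q}$ is cyclic of prime degree $p$, the character group of $\operatorname{Gal}(K/\mathbb{Q})$ consists of the trivial character (of conductor $1$) together with $p-1$ non-trivial characters of order $p$, each of which has trivial kernel and therefore cuts out the full field $K$; hence all non-trivial characters share the common conductor $f(K)$. Hasse's conductor--discriminant formula
\[
d(K)\;=\;\prod_{\chi\in\widehat{\operatorname{Gal}(K/\mathbb{Q})}}\mathfrak{f}(\chi)
\]
then collapses to $d(K)=f(K)^{p-1}$, establishing the first assertion and reducing the remaining task to an explicit computation of the rational integer $f(K)$.

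For a prime $q\neq p$, any ramification of $q$ in $K$ is necessarily \emph{tame}, so the inertia group is either trivial or cyclic of order $p$ embedded in $\mathbb{F}_q^{\times}$, which forces $q\equiv 1\pmod p$. To decide which of these primes actually ramify, I would reduce $f(X)$ modulo $q$. Condition~(2) of the theorem functions as a \emph{primitivity} hypothesis: it rules out the substitution $X\mapsto qX$ that would otherwise absorb a factor of $q$ into the coefficients without producing ramification. Granted this, a direct analysis (via the Newton polygon of $f$ at $q$, or equivalently via the $q$-adic valuation of the resultant of $f$ with its derivative) shows that for $q\equiv 1\pmod p$ the prime $q$ is ramified in $K$ if and only if $q$ divides at least one $a_i$, and in that case the tame conductor exponent at $q$ is exactly $1$. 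This yields the stated product over primes $q\equiv 1\pmod p$.

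For the prime $p$, the completion $K_{\mathfrak p}/\mathbb{Q}_p$ is either unramified or totally wildly ramified of degree $p$. Because the absolute ramification index of $\mathbb{Q}_p$ equals $1$, the Hasse--Arf theorem forces the break in the upper-numbering inertia filtration to occur at $1$ in the wild case, so the $p$-conductor exponent is either $0$ or $2$, matching the dichotomy $\alpha\in\{0,2\}$. To translate this into the explicit conditions on the $a_i$ and $\operatorname{disc}(f)$, I would first inspect the Newton polygon of $f(X)$ at $p$; when the Newton polygon alone is inconclusive, I would pass to a suitable translate $f(X-c)$ and re-examine. Eisenstein-type tests on $f$ or on its translate, combined with Krasner's lemma to relate the local factor of $f$ to the global extension, identify precisely when $\alpha=2$: the first ``holds'' clause corresponds to the plain wild regime in which $f$ itself is visibly ramified at $p$, while the second corresponds to a shifted wild regime that requires the change of variables before an Eisenstein structure becomes apparent.

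The main obstacle is the local analysis at $p$: distilling the four listed sub-cases from the Newton-polygon and translate arguments is bookkeeping-intensive, and one must verify that the two ``holds'' clauses are mutually exclusive with their ``does not hold'' counterparts so that $\alpha$ is unambiguously defined and the exponent $2$ arises from at most one mechanism at a time. Everything else reduces to standard character theory and tame ramification, and once the $p$-contribution is pinned down the full formula for $f(K)$ and the identity $d(K)=f(K)^{p-1}$ follow by assembling the local conductor exponents.
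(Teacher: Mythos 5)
This statement is quoted from Spearman--Williams \cite[Theorem 1]{BKS}; the paper itself gives no proof, so your sketch can only be measured against the standard argument. Your skeleton is the right one and matches that argument in outline: the conductor--discriminant formula collapses to $d(K)=f(K)^{p-1}$ because all $p-1$ nontrivial characters of a cyclic group of prime order are faithful and hence share the conductor of $K$; tame ramification at $q\neq p$ forces $q\equiv 1\pmod p$ with conductor exponent $1$; and at $p$ the Hasse--Arf bound on the ramification break gives conductor exponent $0$ or $2$, explaining $\alpha\in\{0,2\}$. All of that is correct.

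There are, however, two genuine problems. First, your tame criterion is wrong: you assert that $q\equiv 1\pmod p$ ramifies if and only if $q$ divides \emph{at least one} $a_i$, whereas the theorem's product (and the truth) requires $q\mid a_i$ for \emph{every} $i=0,\dots,p-2$. Since $[K:\mathbb{Q}]=p$ is prime, a ramified $q\neq p$ is totally ramified, so (once one knows $q$ does not divide the relevant index) $f(X)\equiv (X-c)^p\pmod q$; the vanishing of the $X^{p-1}$ coefficient gives $pc\equiv 0$, hence $c\equiv 0$ and $f\equiv X^p\pmod q$, i.e.\ $q$ divides all the coefficients. Your existential version would inflate the product and falsify the formula (e.g.\ $p=3$, $q\mid a_0$, $q\nmid a_1$ gives $f\equiv X(X^2+a_1)$, typically unramified). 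Moreover, passing from the factorization of $f\bmod q$ to the splitting of $q$ requires controlling $[\mathcal{O}_K:\mathbb{Z}[\theta]]$ at $q$; condition (2) is exactly the normalization that makes this work after at most one substitution $X\mapsto qX$, and your Newton-polygon/resultant remark does not by itself close that gap. Second, the $p$-adic case is left entirely as a plan: the four displayed coefficient conditions (the precise interplay of $p^{p(p-1)}\mid\operatorname{disc}(f)$ with $p^{p-1}\parallel a_0$, $p^{p+1-i}\mid a_i$, etc.) are the actual content of \cite{BKS}, and ``Eisenstein tests on a suitable translate plus bookkeeping'' does not derive them, nor verify that the case distinctions are exhaustive and mutually exclusive. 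As it stands the proposal is a correct strategy with one false step and the hardest computation omitted.
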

Let $K$ be a finite Galois extension of $\mathbb{Q}$ and for any prime number $p$, we denote the ramification index of $p$ in $K/\mathbb{Q}$ by $e_p$. In \cite{JLC}, Chabert obtained a nice description for the cardinality of $\text{Po}(K)$ for cyclic extensions $K/\mathbb{Q}$.
\begin{prop}\label{CHABERT}
\cite[Corollary 3.11]{JLC} Assume that the extension $K/\mathbb{Q}$ is cyclic of degree $n$.
\begin{enumerate}
    \item If $K$ is real and $N(\mathcal{O}_K^{\times})=\{1\}$, then $|\text{Po}(K)|=\frac{1}{2n}\times\prod_p e_p$.
    \item In all other cases, $|\text{Po}(K)|=\frac{1}{n}\times\prod_p e_p$.
\end{enumerate}
\end{prop}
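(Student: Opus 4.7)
The plan is to derive Chabert's formula through Hilbert 90 and the Herbrand quotient of the global units. First I would reinterpret the P\'olya group of a Galois extension $K/\mathbb{Q}$ as the image in $\text{Cl}(K)$ of the group $\mathcal{I}_K^G$ of $G$-fixed fractional ideals, the latter being generated over $\mathcal{I}_{\mathbb{Q}}$ by the Ostrowski ideals $\Pi_q = \prod_{\mathfrak{p}\mid q}\mathfrak{p}$ (one for each rational prime $q$). Since $(q)\mathcal{O}_K = \Pi_q^{e_q}$, the inclusion $\mathcal{I}_{\mathbb{Q}} \hookrightarrow \mathcal{I}_K^G$ has cyclic cokernel of order $e_q$ at each prime $q$, so
$$|\mathcal{I}_K^G / \mathcal{I}_{\mathbb{Q}}| \;=\; \prod_p e_p.$$

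Next I would compute the obstruction $(\mathcal{I}_K^G \cap P_K)/\mathcal{I}_{\mathbb{Q}}$ cohomologically. Applying $G$-invariants to the short exact sequence $1 \to \mathcal{O}_K^\times \to K^\times \to P_K \to 1$ and using Hilbert~90 to kill $H^1(G,K^\times)$ yields the identification
$$(\mathcal{I}_K^G \cap P_K)/\mathcal{I}_{\mathbb{Q}} \;\cong\; H^1(G, \mathcal{O}_K^\times),$$
which combined with the previous step gives $|\text{Po}(K)|\cdot |H^1(G, \mathcal{O}_K^\times)| = \prod_p e_p$. The hypothesis that $G$ is cyclic has not yet been used, but it will enter crucially in the final step.

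The main obstacle is then pinning down $|H^1(G, \mathcal{O}_K^\times)|$, and this is precisely where the case split of the statement materializes. I would invoke the classical Herbrand quotient
$$Q(\mathcal{O}_K^\times) \;=\; \frac{|\widehat{H}^0(G, \mathcal{O}_K^\times)|}{|H^1(G, \mathcal{O}_K^\times)|} \;=\; \frac{1}{n}\prod_{v\mid\infty_{\mathbb{Q}}}[K_w:\mathbb{Q}_v],$$
which evaluates to $1/n$ when $K$ is totally real and to $2/n$ when $K$ is totally complex. Since $\widehat{H}^0(G, \mathcal{O}_K^\times)$ equals $\{\pm 1\}/N(\mathcal{O}_K^\times)$, it has order $2$ precisely when $N(\mathcal{O}_K^\times)=\{1\}$ and order $1$ otherwise. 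In the totally complex case, complex conjugation lying in $G$ forces every norm of a unit to be a nonnegative element of $\mathbb{Z}^\times$, so $N(\mathcal{O}_K^\times) = \{1\}$ automatically; this makes the totally complex branch collapse into the same value $|H^1| = n$ obtained in the real case with $N(\mathcal{O}_K^\times) = \{\pm 1\}$, while the remaining configuration (real with trivial norm group) gives $|H^1| = 2n$. Substituting into the identity above delivers the two displayed formulas.

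The likely expository hurdle is not the identity itself but a clean account of the Herbrand-quotient evaluation, which traditionally requires a Shapiro's-lemma reduction to the archimedean local units combined with Dirichlet's unit theorem; once that is in hand, the remainder of the argument is a direct assembly of the pieces above.
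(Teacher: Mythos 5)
The paper does not actually prove this proposition: it is quoted verbatim from Chabert \cite[Corollary 3.11]{JLC}, so there is no in-paper argument to compare against. Your derivation is correct, and it is essentially the standard one underlying Chabert's result (the strongly ambiguous class number formula specialized to base field $\mathbb{Q}$). To recap the verification: for Galois $K/\mathbb{Q}$ the group $\mathcal{I}_K^G$ is free on the ideals $\Pi_q=\prod_{\mathfrak{p}\mid q}\mathfrak{p}$ and $(q)\mathcal{O}_K=\Pi_q^{e_q}$, so $|\mathcal{I}_K^G/\mathcal{I}_{\mathbb{Q}}|=\prod_p e_p$; by Zantema's description \eqref{amb}, $\mathrm{Po}(K)$ is the image of $\mathcal{I}_K^G$ in $\mathrm{Cl}(K)$, and since $(K^\times)^G=\mathbb{Q}^\times$ and $P_K^G=P_K\cap\mathcal{I}_K^G$, Hilbert 90 applied to $1\to\mathcal{O}_K^\times\to K^\times\to P_K\to 1$ gives $(P_K\cap\mathcal{I}_K^G)/\mathcal{I}_{\mathbb{Q}}\cong H^1(G,\mathcal{O}_K^\times)$, whence $|\mathrm{Po}(K)|\cdot|H^1(G,\mathcal{O}_K^\times)|=\prod_p e_p$. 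The Herbrand quotient of the units for cyclic $K/\mathbb{Q}$ of degree $n$ is $n_\infty/n$ with $n_\infty\in\{1,2\}$ the archimedean local degree, $\widehat{H}^0(G,\mathcal{O}_K^\times)=\{\pm1\}/N(\mathcal{O}_K^\times)$, and your observation that a totally imaginary Galois $K$ forces $N(\mathcal{O}_K^\times)=\{1\}$ (pair each embedding with its conjugate) correctly collapses the three configurations into the two cases of the statement. The only ingredients you take on faith are classical: the unit-group Herbrand quotient computation (Dirichlet plus Herbrand's lemma on lattices) and Zantema's identification of $\mathrm{Po}(K)$ with the classes of ambiguous ideals, both of which are legitimately citable. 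Note that for the application in this paper $n=7$ is odd, so $K_t$ is automatically totally real and only your first case is ever used, but your argument covers the general statement as claimed.
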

Zantema \cite[\S 3]{ZAN} showed that $\text{Po}(K)$ is the subgroup of $\text{Cl}(K)$ generated by the classes of the ambiguous ideals of $K$.  In other words, 
\begin{equation}\label{amb}
    \text{Po}(K)=\{[\mathfrak{a}]\in \text{Cl}(K) : \mathfrak{a}^\tau=\mathfrak{a}\text{ for all }\tau\in \text{Gal}(K/\mathbb{Q})\}.
\end{equation}
We quote the following result due to Erd\"os \cite{PER} which plays a crucial role in the proof of our main theorem.
\begin{thm}\label{ero}
\cite[\S 1, Theorem ]{PER} Let $f(x)$ be a polynomial of degree $l\geq3$ whose coefficients are integers with the highest common factor 1 and the leading coefficient is positive. Assume that $f(x)$ is not divisible by the $(l-1)$-th power of a linear polynomial with integer coefficients. Then there are infinitely many positive integers $n$ for which $f(n)$ is $(l-1)$-th power free.
\end{thm}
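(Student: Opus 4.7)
The plan is to combine a Chinese Remainder Theorem (CRT) construction with Theorem \ref{ero} applied in an arithmetic progression. For each $i \in \{0,\ldots,m-1\}$ I will force $\omega(E(t+i)) \geq r+2$ and $E(t+i)$ to be fifth-power free, after which part~(1) of Theorem \ref{T1} gives $\text{Po}(K_{t+i}) \supseteq (\mathbb{Z}/7\mathbb{Z})^r$.

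By Chebotarev applied to the compositum of $\mathbb{Q}(\zeta_7)$ with a splitting field of $E$ over $\mathbb{Q}$, the set of primes $p \equiv 1 \pmod 7$ modulo which $E$ has a simple root is infinite. Pick $m(r+2)$ distinct such primes $\{p_{i,j}\}_{0 \leq i \leq m-1,\,1 \leq j \leq r+2}$, all exceeding $|\text{disc}(E)|$ and $7$. For each $(i,j)$, let $c_{i,j}$ be a simple root of $E$ mod $p_{i,j}$ and $\tilde c_{i,j}$ its unique Hensel lift to $\mathbb{Z}/p_{i,j}^5\mathbb{Z}$. By CRT choose $a\in\mathbb{Z}$ with
$$a + i \equiv c_{i,j} \pmod{p_{i,j}}, \qquad a + i \not\equiv \tilde c_{i,j} \pmod{p_{i,j}^5}$$
for every $(i,j)$; this is possible because the first condition still leaves $p_{i,j}^{4}-1\ge 1$ admissible residues mod $p_{i,j}^5$. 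Setting $M=\prod p_{i,j}^5$, every $t\equiv a\pmod M$ satisfies $p_{i,j}\mid E(t+i)$ but $p_{i,j}^5\nmid E(t+i)$; in particular $\omega(E(t+i)) \geq r+2$ for every $i$.

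It remains to find infinitely many $t\equiv a\pmod M$ along which every $E(t+i)$ is also fifth-power free. For each fixed $i$, $\tilde E_i(k) := E(a+Mk+i)$ is a degree-$6$ polynomial not divisible by the fifth power of a linear factor, so the quantitative form of Theorem \ref{ero} (Erd\H os' proof bounds the exception set by $o(N)$, not merely producing infinitely many good values) gives $o(N)$ bad $k\leq N$. Summing over the $m$ shifts still leaves $o(N)$ exceptional $k$, so infinitely many $k$ work for all $i$ at once. For any such $t=a+Mk$, Theorem \ref{T1} then gives $\text{Po}(K_{t+i})\cong(\mathbb{Z}/7\mathbb{Z})^{\omega(E(t+i))-\varepsilon_i}$ with $\varepsilon_i\in\{1,2\}$, so $\text{Po}(K_{t+i})\supseteq(\mathbb{Z}/7\mathbb{Z})^r$, yielding the required infinite family of blocks.

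The main obstacle is exactly this last step. Theorem \ref{ero} as stated supplies only qualitative infinitude for one polynomial, whereas the target is simultaneous fifth-power freeness of $m$ shifted polynomials inside a fixed arithmetic progression. Bridging the gap needs the stronger $o(N)$-exception bound present in Erd\H os' original argument, or, equivalently, a direct sieve exploiting the estimate $\rho_E(p^5)\leq 6$ for primes $p\nmid\text{disc}(E)$ together with $|E(n)|\ll n^6$ to handle the large-prime tail.
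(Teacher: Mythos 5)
Your proposal does not prove the statement in question. The statement is \thmref{ero}, Erd\H{o}s' theorem that a primitive degree-$l$ polynomial with positive leading coefficient, not divisible by the $(l-1)$-th power of a linear polynomial, takes $(l-1)$-th-power-free values at infinitely many positive integers. This is an external result that the paper quotes from \cite{PER} without proof. What you have written is instead a proof of \thmref{T3} (blocks of $m$ consecutive Hashimoto--Hoshi fields with large P\'olya groups), and in the course of that argument you \emph{invoke} \thmref{ero} --- in fact a strengthened quantitative form with an $o(N)$ bound on the exceptional set, valid in arithmetic progressions --- as a black box. As a proof of the stated theorem this is circular: you assume the statement (and more) rather than establish it. Nothing in your text engages with the actual content of \thmref{ero}: there is no sieve over the moduli $p^{l-1}$, no splitting into small and large primes, and no use of the hypothesis that $f$ is not divisible by the $(l-1)$-th power of a linear factor, which is precisely where the substance of Erd\H{o}s' argument lies.

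For what it is worth, even read as a proof of \thmref{T3} your route differs from the paper's: the paper sidesteps power-freeness entirely by using Schur's lemma to choose, for each of the $m$ shifted polynomials, a set of $r+1$ primes dividing some value, making the $m$ sets pairwise disjoint, and gluing by CRT to force $\omega(E(t+i))\ge r+1$ simultaneously; it never imposes that the $E(t+i)$ be fifth-power free. Your version additionally demands simultaneous fifth-power-freeness via a quantitative Erd\H{o}s/Reuss-type estimate, which is stronger input than \thmref{ero} as stated provides (you acknowledge this gap yourself), and is more than is needed for the containment $\text{Po}(K_{t+i})\supseteq(\mathbb{Z}/7\mathbb{Z})^{r}$. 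In any case, that is a different theorem from the one you were asked to prove.
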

We now state a fundamental result on power-free values of polynomials.
\begin{thm}\label{Hel}\cite[Theorem 1]{TREUSS}
Let $f(x)\in\mathbb{Z}[x]$ be an irreducible polynomial of degree $d \geq3$ and assume that
$f$ has no fixed $(d-1)$th power prime divisor. Define
$$N_f'(X)=\#\{p\leq X : p \text{ prime}, f(p) \text{ is } (d-1)\text{-free}\}.$$
Then, for any $C>1$, we have 
$$N_f'(X)=c_f'\pi(X)+O_{C,f}\left(\frac{X}{(\log X)^C}\right),$$
as $x\rightarrow\infty$, where 
$$c_f'=\prod_{p}\left(1-\frac{\rho'(p^{d-1})}{\phi(p^{d-1})}\right)$$
and 
$$\rho'(d)=\#\{n\!\!\!\!\!\pmod d : (d,n)=1, d \mid f(n) \}.$$
\end{thm}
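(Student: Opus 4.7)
The plan is to use Theorem~\ref{T1}(1) as a bridge, reducing the group-theoretic statement to a divisibility statement about the sextic polynomial $E$. Given $m\ge 2$ and $r>1$, it suffices to produce infinitely many $t\in\mathbb{Z}$ such that for every $i=0,1,\dots,m-1$ the integer $E(t+i)$ is fifth-power-free and satisfies $\omega(E(t+i))\ge r+2$; for such $t$, Theorem~\ref{T1}(1) yields
\[
\text{Po}(K_{t+i})\;\cong\;(\mathbb{Z}/7\mathbb{Z})^{\,\omega(E(t+i))-c_i},\qquad c_i\in\{1,2\},
\]
so the $7$-rank of each $\text{Po}(K_{t+i})$ is at least $r$, which is exactly the containment claimed.

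For the construction, I would fix $m(r+2)$ distinct primes $\{q_{i,j}\}_{0\le i\le m-1,\,1\le j\le r+2}$ subject to three conditions: each $q_{i,j}\equiv 1\pmod 7$; each $q_{i,j}$ is coprime to $2\cdot 7\cdot\mathrm{disc}(E)$; and $E(X)$ admits a simple root $\alpha_{i,j}$ modulo $q_{i,j}$. Infinitely many such primes exist, since Chebotarev applied to the splitting field of $E$ over $\mathbb{Q}(\zeta_7)$ guarantees a positive density of primes $q\equiv 1\pmod 7$ at which $E$ has a root, and separability of $E$ forces those roots to be simple away from the finite set of primes dividing $\mathrm{disc}(E)$. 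Using the Chinese Remainder Theorem I choose $t_0\in\mathbb{Z}$ satisfying $t_0+i\equiv\alpha_{i,j}\pmod{q_{i,j}}$ for every pair $(i,j)$; then on the arithmetic progression
\[
\mathcal{A}=\{t\in\mathbb{Z}:t\equiv t_0\pmod{Q}\},\qquad Q=\prod_{i,j}q_{i,j},
\]
each $E(t+i)$ is automatically divisible by the $r+2$ distinct primes $q_{i,1},\dots,q_{i,r+2}$, so $\omega(E(t+i))\ge r+2$ holds for every $t\in\mathcal{A}$.

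Next I would show that infinitely many $t\in\mathcal{A}$ make every $E(t+i)$ fifth-power-free simultaneously. The one-polynomial statement is Theorem~\ref{ero}, since each shifted sextic $E(X+i)$ is primitive, has positive leading coefficient, and is not divisible by the fifth power of a linear polynomial. To combine all $m$ shifts along $\mathcal{A}$, I would run the standard power-free-values sieve: for each prime $p\nmid\mathrm{disc}(E)$, Hensel's lemma bounds the density of $t\in\mathcal{A}$ with $p^5\mid E(t+i)$ for some $i$ by $6m/p^5$, giving a summable tail; the finitely many remaining small primes and the primes $q_{i,j}$ themselves contribute a local density strictly less than $1$, because simplicity of each $\alpha_{i,j}$ makes the density of $t\in\mathcal{A}$ with $q_{i,j}^5\mid E(t+i)$ equal to $q_{i,j}^{-4}$, which can be made arbitrarily small by taking the $q_{i,j}$ sufficiently large. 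Hence a positive proportion of $t\in\mathcal{A}$ is admissible, and by Theorem~\ref{T1}(1) each such $t$ yields a block $K_t,K_{t+1},\dots,K_{t+m-1}$ with every P\'olya group containing $(\mathbb{Z}/7\mathbb{Z})^r$.

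The main obstacle is the simultaneous fifth-power-free condition along the prescribed progression $\mathcal{A}$: one must ensure that forcing $q_{i,j}\mid E(t+i)$ does not inadvertently force $q_{i,j}^5\mid E(t+i)$, and one must handle the mild dependence between the $m$ sieve conditions at a single prime. The former is resolved by choosing the roots $\alpha_{i,j}$ to be simple, so that Hensel's lemma controls the lift-counts precisely; the latter is absorbed into a convergent sum, with the sieve dimension entering only through the factor $m$. Beyond this, the argument is essentially bookkeeping, combining Theorem~\ref{T1}(1) with a standard CRT plus power-free-values sieve.
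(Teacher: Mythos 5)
Your proposal does not address the statement it is supposed to prove. The statement in question is Theorem~\ref{Hel}, which is Reuss's analytic result on $(d-1)$-free values of an irreducible polynomial at prime arguments: the asymptotic $N_f'(X)=c_f'\pi(X)+O_{C,f}\bigl(X/(\log X)^C\bigr)$ with the Euler product $c_f'=\prod_p\bigl(1-\rho'(p^{d-1})/\phi(p^{d-1})\bigr)$. In the paper this is a quoted theorem from the literature (\cite[Theorem 1]{TREUSS}), stated without proof and later used as a black box in the proof of Theorem~\ref{T1}(2). Nothing in your write-up engages with this statement: you never mention the counting function $N_f'(X)$, the local densities $\rho'(p^{d-1})$, the error term, or any sieve estimate strong enough to produce an asymptotic with a positive leading constant. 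Proving Theorem~\ref{Hel} genuinely requires the machinery of Reuss's paper (a careful treatment of the large-modulus range in the power-free sieve, beyond the elementary ``summable tail'' heuristic you invoke in passing), and that work is entirely absent.

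What you have actually written is a proof sketch of Theorem~\ref{T3}, the result about blocks of $m$ consecutive fields with $7$-rank at least $r$. As such it is reasonable in outline, and it is worth noting that it is \emph{more} ambitious than the paper's own argument for Theorem~\ref{T3}: the paper only uses Schur's lemma on prime divisors of the shifted polynomials together with the Chinese Remainder Theorem to force $\omega\bigl(E(t+i)\bigr)$ large, and makes no attempt to sieve for simultaneous fifth-power-freeness along the resulting progression, whereas you at least flag that issue and propose a Hensel-plus-tail-sum argument to handle it. But none of this constitutes a proof of Theorem~\ref{Hel}, which is the statement under review.
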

Let $f(x)$ be an irreducible polynomial with integral coefficients and $f(m)>0$ for $m=1,2,\dots$. Let $\omega(m)$ denote the number of distinct primes dividing $m$. Then, for primes $p$, the following result due to Halberstam \cite{HAL} determines the distribution of values of $\omega(f(p))$.
\begin{thm}\label{ham} \cite[Theorem 2]{HAL}
Let $f(X) \in \mathbb{Z}[X]$ be any non-constant polynomial. For all but $o(X/ \log X)$ primes $p \leq X$,
\begin{equation*}
    \omega\Big(f(p)\Big)=\Big(1+o(1)\Big){\log \log X}.
\end{equation*}
\end{thm}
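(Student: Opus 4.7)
The plan is to adapt the Turán--Kubilius second moment method to the sequence $\{f(p): p\le X\}$ indexed by primes. Setting $F(p):=\omega(f(p))$, I reduce the normal order claim to the two moment asymptotics
\[
\sum_{p\le X} F(p) = (1+o(1))\,\pi(X)\log\log X, \qquad \sum_{p\le X} F(p)^2 = (1+o(1))\,\pi(X)(\log\log X)^2.
\]
Chebyshev's inequality then forces the exceptional set $\{p\le X : |F(p)-\log\log X|>\varepsilon\log\log X\}$ to have size $o(\pi(X))=o(X/\log X)$, which is exactly the statement of the theorem.

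For the first moment I write $F(p)=\sum_{\ell\mid f(p)}1$ and swap the order of summation to get $\sum_{p\le X} F(p) = \sum_{\ell\text{ prime}} \#\{p\le X : f(p)\equiv 0\pmod\ell\}$. Let $\rho(\ell):=\#\{a\pmod\ell : f(a)\equiv 0,\ (a,\ell)=1\}$. For primes $\ell\le X^{\theta}$ with a suitably small $\theta>0$, the Siegel--Walfisz / Bombieri--Vinogradov theorem, applied in each of the coprime residue classes $a$ with $f(a)\equiv 0$, gives
\[
\#\{p\le X : f(p)\equiv 0\pmod\ell\} = \rho(\ell)\,\frac{\pi(X)}{\varphi(\ell)} + \text{error}.
\]
Summing yields $\pi(X)\sum_{\ell\le X^{\theta}}\rho(\ell)/\ell$ as the main term. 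The Mertens-type asymptotic $\sum_{\ell\le Y}\rho(\ell)/\ell=\log\log Y + O(1)$ follows from Landau's prime ideal theorem applied to the splitting field $L$ of $f$, since $\rho(\ell)$ counts the degree-one prime factors of $\ell\mathcal{O}_L$ (whose mean is $1$ by Chebotarev). Primes $\ell>X^{\theta}$ contribute only $O(\pi(X))$ to the total via the trivial divisor bound $\#\{p:\ell\mid f(p)\}\le \deg(f)(X/\ell+1)$, which is negligible.

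The second moment is handled analogously. Expanding $F(p)^2=\sum_{\ell_1,\ell_2\mid f(p)}1$ splits into a diagonal contribution equal to $\sum_{p\le X}\omega(f(p))=O(\pi(X)\log\log X)$ (a lower-order term) plus an off-diagonal sum indexed by distinct primes $\ell_1\ne\ell_2\le X^{\theta}$. Applying Siegel--Walfisz to the combined modulus $\ell_1\ell_2$ and using $\rho(\ell_1\ell_2)=\rho(\ell_1)\rho(\ell_2)$ from the Chinese remainder theorem, the off-diagonal sum becomes
\[
\pi(X)\Bigl(\sum_{\ell\le X^{\theta}} \rho(\ell)/\ell\Bigr)^{\!2} + o\bigl(\pi(X)(\log\log X)^2\bigr) = (1+o(1))\,\pi(X)(\log\log X)^2.
\]
Combining both moment computations yields $\sum_{p\le X}(F(p)-\log\log X)^2 = o(\pi(X)(\log\log X)^2)$, and Chebyshev's inequality concludes the proof.

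The main obstacle is obtaining uniform control of the error term in the prime-counting estimate $\#\{p\le X : f(p)\equiv 0\pmod\ell\}$ as $\ell$ ranges up to $X^{\theta}$; a single application of Siegel--Walfisz is insufficient, and one must exploit the Bombieri--Vinogradov theorem (or a large sieve bound) to average the errors over moduli. A second technical point is pinning down the constant $1$ in the Mertens-type asymptotic $\sum_{\ell\le Y}\rho(\ell)/\ell=\log\log Y+O(1)$, which relies on the density-one behaviour of degree-one prime ideals in the splitting field of $f$ and is the only place the (mild) hypothesis on $f$ enters non-trivially.
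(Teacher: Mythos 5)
This statement is quoted from Halberstam \cite{HAL} and the paper supplies no proof of it, so there is no internal argument to compare against; I can only measure your proposal against the statement itself and the known proof. Your overall strategy --- the Tur\'an second-moment method, reducing the normal-order claim to first- and second-moment asymptotics for $\omega(f(p))$ by counting primes in the residue classes cut out by $f(a)\equiv 0\pmod{\ell}$, together with the Mertens-type estimate $\sum_{\ell\le Y}\rho(\ell)/\ell=\log\log Y+O(1)$ --- is the standard route and is in spirit Halberstam's own (he pushes the same expansion to all moments to obtain an Erd\H{o}s--Kac theorem). Your appeal to Bombieri--Vinogradov to control the errors on average over moduli $\ell\le X^{\theta}$ (and $\ell_1\ell_2\le X^{2\theta}$ with $\theta<1/4$ for the second moment, using $\rho(\ell)\le\deg f$ to pass from a max over residues to a sum) does close the main analytic difficulty you correctly identify; the only caveat is historical, since Halberstam's 1956 argument predates Bombieri--Vinogradov and proceeds via Brun's sieve instead.

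Two concrete problems remain. First, your tail estimate for $\ell>X^{\theta}$ fails as written: summing $\deg(f)(X/\ell+1)$ over primes $X^{\theta}<\ell\ll X^{d}$ gives a contribution $\deg(f)\,X\sum 1/\ell = O(X)$ from the first piece alone, which already exceeds the main term $\pi(X)\log\log X$, and the $+1$ piece contributes on the order of $\pi(X^{d})$. The fix is to swap the summation the other way: since $|f(p)|\ll X^{d}$, each value $f(p)$ has at most $d/\theta+O(1)$ distinct prime factors exceeding $X^{\theta}$, so the tail contributes $O(\pi(X))$ in total. Second, your Chebotarev input is misidentified and conceals a hypothesis: $\rho(\ell)$ is the number of degree-one primes above $\ell$ in the \emph{root field} $\mathbb{Q}[x]/(g)$ of an irreducible factor $g$ of $f$, not in the splitting field $L$ (in $L$ the number of degree-one primes above $\ell$ is either $0$ or $[L:\mathbb{Q}]$), and the mean of $\rho(\ell)$ over primes $\ell$ equals the number of distinct irreducible factors of $f$, which is $1$ only when $f$ is essentially irreducible. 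For $f(X)=(X+1)(X+2)$ your own Mertens sum would yield $2\log\log Y$, and indeed the normal order of $\omega(f(p))$ is then $2\log\log X$; so the statement as transcribed in the paper (``any non-constant polynomial'') is itself too strong, and your argument, once repaired, proves it precisely in the irreducible case --- which is the only case the paper actually uses, since $\psi$ is irreducible.
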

The following result of Gras \cite{MNG} addresses the monogenicity of cyclic number fields of prime degree.
\begin{prop}\label{GRAS}
\cite[\S 5, Théorème]{MNG} If $K$ is a cyclic number field of prime degree $p\geq5$, then $K$ is monogenic only if $2p+1$ is prime and it is the maximal real subfield of the cyclotomic field $\mathbb{Q}(\zeta_{2p+1})$.
\end{prop}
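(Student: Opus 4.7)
The plan is to combine Dedekind's theorem on polynomial factorization modulo primes with the class-field-theoretic description of cyclic prime-degree extensions. First I would suppose $K$ is monogenic, so $\mathcal{O}_K=\mathbb{Z}[\alpha]$ for some $\alpha$ with minimal polynomial $f(X)\in\mathbb{Z}[X]$ of degree $p$. Since $K/\mathbb{Q}$ is Galois of prime degree $p$, every unramified rational prime $q$ is either inert or splits completely into $p$ distinct primes. Under monogenicity, the Dedekind--Kummer theorem matches the splitting of $q\mathcal{O}_K$ with the factorization of $f(X)\pmod q$; hence if $q$ splits completely, then $f(X)\pmod q$ must have $p$ distinct roots in $\mathbb{F}_q$, forcing $q\ge p$. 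Consequently, no rational prime $q<p$ can split completely in $K$.

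Next I would translate this arithmetic constraint into the language of class field theory. By Kronecker--Weber, $K$ sits inside a cyclotomic field $\mathbb{Q}(\zeta_{\mathfrak{f}})$, where $\mathfrak{f}=f(K)$ is the conductor of $K$, and $K$ corresponds to an index-$p$ subgroup $H\le (\mathbb{Z}/\mathfrak{f}\mathbb{Z})^{*}$. A prime $q\nmid \mathfrak{f}$ splits completely in $K$ precisely when $q \bmod \mathfrak{f}\in H$. The monogenicity constraint derived above thus reads: for every prime $q<p$ coprime to $\mathfrak{f}$, one has $q \bmod \mathfrak{f}\notin H$. Together with the standard fact that the only primes dividing $\mathfrak{f}$ are $p$ itself and those $\ell\equiv 1\pmod{p}$, this sharply restricts the admissible conductors.

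Then I would run a case analysis on $\mathfrak{f}$. In the clean case $\mathfrak{f}=2p+1$ prime, the subgroup $H$ has index $p$ in the cyclic group $(\mathbb{Z}/(2p+1)\mathbb{Z})^{*}$ of order $2p$, hence $|H|=2$ and $H=\{\pm 1\}$. Every prime $q<p$ satisfies $q\not\equiv\pm 1\pmod{2p+1}$, so no small prime lies in $H$; the monogenicity obstruction vanishes, and the associated field is exactly $\mathbb{Q}(\zeta_{2p+1})^{+}$, whose monogenicity via $\mathcal{O}_K=\mathbb{Z}[\zeta_{2p+1}+\zeta_{2p+1}^{-1}]$ is classical. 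For every other conductor --- a prime $\ell>2p+1$ with $\ell\equiv 1\pmod p$, a prime power of $p$, or a composite of such primes --- one must show that the corresponding $H$ unavoidably contains the class of some prime $q<p$.

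The main obstacle will be this final exhaustion of conductors. For a prime conductor $\ell>2p+1$, the subgroup $H$ has order $(\ell-1)/p\ge 3$, and a Chebyshev- or sieve-type comparison between $|H|$ and the number $\pi(p)$ of primes below $p$, together with the equidistribution of small primes modulo $\ell$, should force some small prime into $H$. For composite $\mathfrak{f}$, or for $p\mid\mathfrak{f}$, one must decompose $(\mathbb{Z}/\mathfrak{f}\mathbb{Z})^{*}$ via the Chinese remainder theorem, project $H$ into each local factor, and track how small primes map into each component to force a collision. This combinatorial-sieve step, which simultaneously eliminates all conductors other than $2p+1$, is the technical heart of Gras' theorem.
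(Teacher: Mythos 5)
This statement is quoted from Gras \cite{MNG}; the paper offers no proof of it, so your proposal can only be judged on its own merits --- and it has a genuine gap. Your entire obstruction to monogenicity is the Dedekind--Kummer argument: if a prime $q<p$ splits completely, then $f(X)\bmod q$ would need $p$ distinct roots in $\mathbb{F}_q$, which is impossible. But this is precisely the classical criterion for $q$ to be a \emph{common index divisor}, i.e.\ for the field index $I(K)$ to exceed $1$. Gras' theorem is strictly stronger than that: it asserts non-monogenicity even for fields with $I(K)=1$, where no prime exhibits the Dedekind--Kummer obstruction for \emph{any} choice of generator. The paper itself furnishes the counterexample to your strategy: Theorem~\ref{N100} produces infinitely many cyclic septic fields that are non-monogenic (by Gras, since $2\cdot 7+1=15$ is not prime) yet have $I(K_t)=1$, so no prime $q<7$ splits completely in them. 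Your method is structurally blind to these fields.

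Concretely, the ``final exhaustion of conductors'' you defer to a sieve argument is false as stated. For a prime conductor $\ell\equiv 1\pmod p$ with $\ell>2p+1$, you would need some prime $q<p$ to be a $p$-th power residue modulo $\ell$; by Chebotarev applied to $\mathbb{Q}(\zeta_\ell, 2^{1/p},3^{1/p},\dots)$, a positive proportion of such $\ell$ have every prime $q<p$ a non-residue, and for those conductors your obstruction vanishes while the field remains non-monogenic. Gras' actual proof works with a finer invariant: writing $\mathcal{O}_K=\mathbb{Z}[\theta]$ forces the different to be generated by $f'(\theta)=\prod_{i=1}^{p-1}(\theta-\sigma^i\theta)$, and she analyzes the resulting relations among the elements $\theta-\sigma^i\theta$ inside the unit group viewed as a $\mathbb{Z}[G]$-module; for $p\ge 5$ the unit rank $p-1\ge 4$ makes these relations incompatible with any conductor other than a prime $2p+1$, and in that case they pin $K$ down to $\mathbb{Q}(\zeta_{2p+1})^{+}$. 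Note also that your plan never uses the hypothesis $p\ge 5$, which is essential (cyclic cubic monogenic fields with many different conductors exist) --- a further sign that the mechanism you propose is not the one driving the theorem.
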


\section{Discriminant of Hashimoto--Hoshi Cyclic Septic Fields}

In this section, we explicitly evaluate the discriminant of the Hashimoto--Hoshi family of cyclic septic fields using results of Spearman and Williams \cite{BKS}. We begin by introducing the following notation.
\begin{align}
    E &= E(t) = t^{6} + 2t^{5} + 11t^{4} + t^{3} + 16t^{2} + 4t + 8 \notag\\
    F &= F(t) = 10t^{3} + 10t^{2} + t + 4 \notag\\
    G &= G(t) = 15t^{6} + 30t^{5} - 31t^{4} + 15t^{3} - 201t^{2} - 87t - 174 \notag\\
    H &= H(t) = 6t^{15} + 30t^{14} - 133t^{13} - 504t^{12} - 3255t^{11} - 6244t^{10} \notag\\
    &\quad - 12033t^{9} - 8438t^{8} + 19620t^{7} + 52892t^{6} + 136787t^{5} \notag\\
    &\quad + 167671t^{4} + 179676t^{3} + 206640t^{2} + 103680t + 82944\label{polynomials}\\
    I &= I(t) = 12t^{9} + 36t^{8} - 78t^{7} - 84t^{6} - 861t^{5} - 588t^{4} \notag\\
    &\quad - 1155t^{3} - 1214t^{2} - 324t - 432 \notag\\
    J &= J(t) = 5t^{12} + 20t^{11} - 66t^{10} - 162t^{9} - 1126t^{8} - 1441t^{7} \notag\\
    &\quad - 2534t^{6} - 1641t^{5} + 1857t^{4} + 426t^{3} + 5574t^{2} + 3456t + 3456. \notag
\end{align}
As an application of \thmref{CT1}, we establish the following result.

\begin{thm}\label{T2}
With the notation above, let $K_t$ be a Hashimoto--Hoshi cyclic septic field. 
Then the discriminant $d(K_t)$ is given by
\begin{equation*}
    d(K_t)=(f(K_t))^6,
\end{equation*}
where the conductor $f(K_t)$ is given by 
\begin{equation*}
    f(K_t) = 7^{\alpha}\prod\limits_{\substack{q \equiv 1 \pmod{7} \\ q \mid E \\ v_q(E) \not\equiv 0 \pmod{7}}}q,
\end{equation*}
where the product runs over all primes $q$, and 
\begin{equation*}
\alpha = \begin{cases}
0 & \text{if } t \not\equiv 2 \pmod{7},\\
2 & \text{if } t \equiv 2 \pmod{7}.
\end{cases}
\end{equation*}
\end{thm}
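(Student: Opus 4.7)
My plan is to apply \thmref{CT1} to a suitable depressed minimal polynomial defining $K_t$. Since $f_t(X)$ has a nonzero $X^6$--coefficient $a_6(t)=-(t^3+t^2+5t+6)$, it is not in the form required by the theorem, so I first perform a Tschirnhaus substitution. Setting $\eta = 7\theta_t + a_6(t)$, where $\theta_t$ is a root of $f_t(X)$, one has $\eta\in K_t$ and $[\mathbb{Q}(\eta):\mathbb{Q}]=7$, so $\mathbb{Q}(\eta)=K_t$. Substituting $X=(Y-a_6(t))/7$ into $f_t(X)$ and multiplying through by $7^7$ produces the minimal polynomial $g_t(Y)$ of $\eta$: the shift by $a_6(t)$ annihilates the $Y^6$ term, while the factor $7$ absorbs the binomial denominators from the $\binom{7}{i}$, yielding a monic polynomial in $\mathbb{Z}[Y]$.

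The second step is to expand $g_t(Y) = Y^7 + b_5(t)Y^5 + b_4(t)Y^4 + b_3(t)Y^3 + b_2(t)Y^2 + b_1(t)Y + b_0(t)$ and identify its coefficients explicitly. A direct calculation (to be verified by the Maple scripts in the appendix) yields, for instance, $b_5(t)=-21\,E(t)$; more generally, each $b_i(t)$ admits a clean expression as a $\mathbb{Z}[t]$--combination of the polynomials $E,F,G,H,I,J$ introduced in (\ref{polynomials}). I then verify hypothesis (2) of \thmref{CT1}---that no prime $q$ simultaneously satisfies $q^{7-i}\mid b_i(t)$ for $i=0,1,\dots,5$---by inspecting leading $t$--coefficients and small evaluations of the $b_i$.

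Once the hypotheses hold, \thmref{CT1} gives $d(K_t)=f(K_t)^6$ and $f(K_t)=7^\alpha\prod q$, the product ranging over primes $q\equiv 1\pmod 7$ dividing every $b_i(t)$. The central task is to reconcile this condition with the cleaner criterion involving only $E(t)$. Since $b_5(t)=-21\,E(t)$, divisibility of every $b_i$ by $q\equiv 1\pmod 7$ immediately forces $q\mid E(t)$. Conversely, explicit polynomial identities among the $b_i(t)$ show that $q\mid E(t)$ implies $q\mid b_i(t)$ for every $i$. The condition $v_q(E(t))\not\equiv 0\pmod 7$ handles the delicate case $q^7\mid E(t)$: in that regime a secondary descent substitution $\eta\mapsto\eta/q$ produces an equivalent depressed polynomial whose coefficients have lost all factors of $q$, so $q$ drops out of the conductor. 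Finally, the exponent $\alpha$ at $p=7$ follows from the dichotomy in \thmref{CT1}: both the condition $7^{42}\mid\operatorname{disc}(f_t)$ and the requisite divisibilities of $b_i(t)$ by powers of $7$ depend only on $t\pmod 7$, so a residue-by-residue check yields $\alpha=2$ precisely when $t\equiv 2\pmod 7$.

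The chief obstacle is the descent step: once $v_q(E(t))$ is a multiple of $7$, one must prove that \emph{all} the $b_i(t)$ are divisible by sufficiently high powers of $q$ that the substitution $\eta\mapsto\eta/q^{k}$ clears $q$ from every coefficient simultaneously. This requires sharp lower bounds on $v_q(b_i(t))$ in terms of $v_q(E(t))$, which ultimately reduce to explicit $\mathbb{Z}[t]$--factorisations of the $b_i(t)$ through the polynomials in (\ref{polynomials}). The residue analysis at $p=7$ is technically involved but essentially routine.
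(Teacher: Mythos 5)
Your overall strategy---depress $f_t$ via $X\mapsto (Y-a_6(t))/7$, apply the Spearman--Williams theorem, translate the coefficient divisibility conditions into conditions on $E(t)$, and settle $\alpha$ by a residue check mod $7$---is the same as the paper's. But there is a genuine gap in how you propose to satisfy the hypotheses of \thmref{CT1}. You claim to verify hypothesis (2) (no prime $q$ with $q^{7-i}\mid b_i$ for all $i$) for the depressed polynomial directly, ``by inspecting leading $t$--coefficients and small evaluations.'' That hypothesis in fact \emph{fails} for the naively depressed polynomial: when $t\equiv 2\pmod 7$ one has $7^3\parallel h_5$, $7^4\parallel h_4$, $7^5\parallel h_3$, $7^6\parallel h_2$, $7^7\parallel h_1$, $7^7\parallel h_0$, and it also fails at any prime $q\nmid 42$ with $v_q(E(t))\ge 7$. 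So the rescaling you relegate to a ``secondary descent'' for the case $q^7\mid E$ is not an optional patch to the conductor formula; it must be performed \emph{before} \thmref{CT1} can legitimately be applied at all. The paper does this by dividing out the maximal $m$ with $m^{7-i}\mid h_i$ for all $i$ and working with $g_t(X)=f_t^*(mX)/m^7$, whose coefficients $k_i=h_i/m^{7-i}$ do satisfy hypothesis (2) by maximality of $m$.

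The second missing ingredient is the coprimality of $E$ and $H$ away from $2$ and $7$. All six coefficients factor as $h_i=(\text{unit or }F,G,I,J,H)\cdot E$, so lower bounds $v_q(h_i)\ge v_q(E)$ are immediate; but to decide whether $q$ survives into the conductor you need the \emph{exact} valuations $v_q(h_0)=v_q(E)$ and $v_q(h_5)=v_q(E)$ for $q\nmid 42$, which pin down $v_q(m)=\lfloor v_q(E)/7\rfloor$ and hence $v_q(k_0)=v_q(E)\bmod 7$. The paper obtains this from an explicit B\'ezout identity $EL+HM=2^5\cdot 7^7$ in $\mathbb{Z}[t]$, which forces $q\nmid H(t)$ whenever $q\mid E(t)$ and $q\nmid 14$. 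Your proposal asks for ``sharp lower bounds on $v_q(b_i)$ in terms of $v_q(E)$,'' which is the wrong direction: without an upper bound on $v_q(h_0)$ one cannot rule out extra factors of $q$ hiding in $H$, being absorbed into $m$, and silently removing $q$ from (or keeping it in) the conductor. Both directions of the equivalence ``$q$ divides all $k_i$ $\iff$ $q\mid E$ and $v_q(E)\not\equiv 0\pmod 7$'' hinge on this coprimality, so the argument as proposed cannot be closed without it.
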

\begin{proof}
Applying a Tschirnhausen transformation to eliminate the $X^6$--term of $f_t(X)$, we obtain the following.  
\begin{align*}
f^*_t(X) =& \: 7^7f_t((X+(t^3 + t^2 + 5 t + 6))/7)\\
=& \: X^7+h_5(t)X^5+h_4(t)X^4+h_3(t)X^3+h_2(t)X^2+h_1(t)X+h_0(t),     
\end{align*}
where 
\begin{align*}
    h_5 = & -21t^{6} - 42t^{5} - 231t^{4} - 21t^{3} - 336t^{2} - 84t - 168\\
    h_4 = & -70t^{9} - 210t^{8} - 917t^{7} - 882t^{6} - 1323t^{5} - 1715t^{4} - 980t^{3} \\
    & - 1036t^{2} - 168t - 224\\
    h_3 = & -105t^{12} - 420t^{11} - 1358t^{10} - 2086t^{9} + 1694t^{8} - 1295t^{7} + 19600t^{6} \\
    & + 8050t^{5} + 37835t^{4} + 15750t^{3} + 33180t^{2} + 9744t + 9744\\
    h_2 = & -84t^{15} - 420t^{14} - 882t^{13} - 1176t^{12} + 11613t^{11} + 18816t^{10} \\
    & + 90258t^{9} + 85547t^{8} + 215467t^{7} + 203791t^{6} + 233534t^{5} \\
    & + 236768t^{4} + 137984t^{3} + 125440t^{2} + 30240t + 24192\\
    h_1 = & -35t^{18} - 210t^{17} - 203t^{16} + 483t^{15} + 14532t^{14} + 36407t^{13} \\
    & + 132300t^{12} + 184674t^{11} + 349524t^{10} + 317107t^{9} + 210728t^{8} \\
    & + 187411t^{7} - 524888t^{6} - 361326t^{5} - 1030512t^{4} - 591192t^{3} \\
    & - 795984t^{2} - 290304t - 193536\\
    h_0 = & -6t^{21} - 42t^{20} + 7t^{19} + 434t^{18} + 5600t^{17} + 17927t^{16} + 62790t^{15} \\
    & + 112799t^{14} + 191023t^{13} + 129675t^{12} - 206409t^{11} - 809585t^{10} \\
    & - 2256471t^{9} - 3404408t^{8} - 5218187t^{7} - 6396040t^{6} - 6152804t^{5} \\
    & - 6382376t^{4} - 4005792t^{3} - 3394944t^{2} - 1161216t - 663552
\end{align*}
From (\ref{polynomials}) we obtain  
\begin{equation}\label{common}
\begin{aligned}
h_5 &= -21E, & \quad h_4 &= -7FE, & \quad h_3 &= -7GE, \\[4pt]
h_2 &= -7IE, & \quad h_1 &= -7JE, & \quad h_0 &= -HE.
\end{aligned}
\end{equation}
Next, let $m$ be the largest positive integer such that  
\begin{equation}\label{dividem}
    m^2 \mid h_5, \qquad m^3 \mid h_4, \qquad m^4 \mid h_3, \qquad 
    m^5 \mid h_2, \qquad m^6 \mid h_1, \qquad m^7 \mid h_0.
\end{equation}
We then define  
\begin{equation}\label{kt-def}
    g_t(X) \;=\; \frac{f^*_t(mX)}{m^5}
    \;=\; X^7 + k_5X^5 + k_4X^4 + k_3X^3 + k_2X^2 + k_1X + k_0,
\end{equation}
where  
\begin{equation}\label{reduce}
    k_5 = \tfrac{h_5}{m^2},  \quad k_4 = \tfrac{h_4}{m^3},  \quad k_3 = \tfrac{h_3}{m^4}, 
    \quad k_2 = \tfrac{h_2}{m^5},  \quad k_1 = \tfrac{h_1}{m^6},  \quad k_0 = \tfrac{h_0}{m^7}.
\end{equation}
Using \textsc{Maple}, we obtain  
\begin{equation}\label{disc}
\operatorname{disc}(g_t) \;=\; 
7^{42} E^{6} t^{22}
\bigl(2t^{4}-2t^{3}+6t^{2}-3t+4\bigr)^{2}
\bigl(t^{5}+t^{4}+t^{3}+2t^{2}+t+1\bigr)^{2} \;/\; m^{18}.
\end{equation}
Clearly, $g_t(X)$ is a defining polynomial for the cyclic septic field $K_t$.  
Therefore, using \thmref{CT1}, we obtain
\begin{equation}\label{maincond}
     f(K)=7^\alpha \prod\limits_{\substack{q\equiv 1\pmod 7 \\ q\mid k_0, \: q\mid k_1, \: q\mid k_2,\:  q\mid k_3, \: q\mid k_4, \: q\mid k_5 }}q
 \end{equation}
 where $q$ runs over primes, and 
 \begin{equation}\label{cond}
\alpha= \begin{cases}
     0, \quad\text{ if }7^{42}\nmid \text{ disc } (g_t) \text{ and } 7\mid k_1, 7\mid k_2, 7\mid k_3, 7\mid k_4, 7\mid k_5\\
     \qquad\text{does not hold, or}\\
     \qquad7^{42}\mid \text{ disc } (g_t) \text{ and } 7^6||k_0, 7^6\mid k_1, 7^5\mid k_3, 7^4\mid k_4, 7^3\mid k_5\\
     \qquad\text{does not hold,}\\
     2, \quad\text{ if }7^{42}\nmid \text{ disc } (g_t) \text{ and } 7\mid k_1, 7\mid k_2, 7\mid k_3, 7\mid k_4, 7\mid k_5\\
     \qquad\text{holds, or}\\
     \qquad7^{42}\mid \text{ disc } (g_t) \text{ and } 7^6||k_0, 7^6\mid k_1, 7^5\mid k_3, 7^4\mid k_4, 7^3\mid k_5\\
     \qquad\text{holds.}\\
 \end{cases}     
 \end{equation}
Let $q$ be a prime with 
\begin{equation*}
    q\equiv1\pmod{7}, \quad q\mid k_0, \: q\mid k_1, \: q\mid k_2,\:  q\mid k_3, \: q\mid k_4, \: q\mid k_5. 
\end{equation*}
We show that 
\begin{equation*}
    q\mid E, \quad v_q(E)\not\equiv0 \pmod{7}.
\end{equation*}
By (\ref{reduce}) we have
\begin{equation*}
    q\mid h_5, \: q\mid h_4, \: q\mid h_3, \: q\mid h_2, \: q\mid h_1, \: q\mid h_0.
\end{equation*}
Since $q \equiv 1 \pmod{7}$, it follows that $q \neq 3,7$. Thus, from (\ref{common}), we deduce $q \mid E$. A direct computation in \textsc{Maple} shows that 
\[
\gcd(E,H) = 1.
\]
Hence there exist polynomials $\widetilde{L}$, $\widetilde{M} \in \mathbb{Q}[t]$ with 
\[
E\widetilde{L} + H\widetilde{M} = 1.
\]
In particular, we can choose $L,M \in \mathbb{Z}[t]$ such that 
\begin{equation}\label{eq:bezout}
EL + HM = 26353376 = 2^{5}\cdot 7^{7}.
\end{equation}
An explicit pair is given by
\[
\begin{aligned}
L(t) &= -150t^{14} - 492t^{13} + 5035t^{12} + 7709t^{11} + 43169t^{10} \\
&\quad + 6075t^{9} - 61261t^{8} - 3409t^{7} - 53980t^{6} + 1520256t^{5} \\
&\quad + 1587001t^{4} - 1436918t^{3} - 5167741t^{2} - 16768814t - 14725412, \\[6pt]
M(t) &= 25t^{5} + 7t^{4} + 119t^{3} - 376t^{2} + 155t + 1738.
\end{aligned}
\]
Since $q \mid E$ but $q \nmid 2,7$, it follows from (\ref{eq:bezout}) that $q \nmid H$.  
Suppose, for the sake of contradiction, that $v_q(E) \equiv 0 \pmod{7}$, say $v_q(E)=7w$ with $w \geq 1$.  
Then, by (\ref{common}), we have 
\begin{equation*}
    q^{7w} \parallel h_5, \qquad 
    q^{7w} \mid h_4, \qquad 
    q^{7w} \mid h_3, \qquad
    q^{7w} \mid h_2, \qquad
    q^{7w} \mid h_1, \qquad
    q^{7w} \parallel h_0,
\end{equation*}
and hence, from (\ref{dividem}), it follows that 
\begin{equation*}
    q^w \parallel m.
\end{equation*}
Therefore, by (\ref{reduce}),
\begin{equation*}
    q \nmid \frac{h_0}{m^7} = k_0,
\end{equation*}
which is a contradiction.  
Thus we conclude that $v_q(E) \not\equiv 0 \pmod{7}$.\\
\noindent
Conversely, let $q$ be a prime such that 
\begin{equation*}
    q \equiv 1 \pmod{7}, \qquad q \mid E, \qquad v_q(E) \not\equiv 0 \pmod{7}.
\end{equation*}
We claim that 
\begin{equation*}
    q \mid k_0, \quad q \mid k_1, \quad q \mid k_2, \quad q \mid k_3, \quad q \mid k_4, \quad q \mid k_5.
\end{equation*}
From (\ref{eq:bezout}), we know that $q \nmid H$.  
Since $v_q(E) \not\equiv 0 \pmod{7}$, we may write 
\[
v_q(E) = 7z + r, \qquad z \geq 0, \quad r \in \{1,2,3,4,5,6\}.
\]
Then, by (\ref{common}), it follows that
\begin{equation*}
    q^{7z+r} \parallel h_5, \quad 
    q^{7z+r} \mid h_4, \quad 
    q^{7z+r} \mid h_3, \quad 
    q^{7z+r} \mid h_2, \quad 
    q^{7z+r} \mid h_1, \quad 
    q^{7z+r} \parallel h_0.
\end{equation*}
Hence, from (\ref{dividem}), we obtain
\begin{equation*}
    q^z \parallel m,
\end{equation*}
and therefore, by (\ref{reduce}),
\begin{equation*}
    q^{5z+r} \parallel k_5, \quad 
    q^{4z+r} \mid k_4, \quad 
    q^{3z+r} \mid k_3, \quad 
    q^{2z+r} \mid k_2, \quad 
    q^{z+r} \mid k_1, \quad 
    q^r \parallel k_0.
\end{equation*}
In particular,
\begin{equation*}
    q \mid k_0, \quad q \mid k_1, \quad q \mid k_2, \quad q \mid k_3, \quad q \mid k_4, \quad q \mid k_5,
\end{equation*}
as required. We have established that 
\begin{equation}\label{eq:product}
\prod_{\substack{q \equiv 1 \pmod{7} \\ q \mid k_0, \, q \mid k_1, \, q \mid k_2, \, q \mid k_3, \, q \mid k_4, \, q \mid k_5}}
    q \;=\;
\prod_{\substack{q \equiv 1 \pmod{7} \\ q \mid E \\ v_q(E) \not\equiv 0 \pmod{7}}} q.
\end{equation}
Finally, to complete the proof, we claim that the exponent $\alpha$ takes the following values in the respective cases:
\begin{equation}\label{eq:alpha}
    \alpha = 
    \begin{cases}
        0, & \text{if } t \not\equiv 2 \pmod{7}, \\[6pt]
        2, & \text{if } t \equiv 2 \pmod{7}.
    \end{cases}
\end{equation}
Let $t \not\equiv 2 \pmod{7}$. In this case, it is straightforward to verify that 
\[
v_7(E) = v_7(F) = v_7(G) = v_7(H)= v_7(I) = v_7(J)  = 0.
\]
From (\ref{common}), it follows that
\[
7 \parallel h_5, \quad 7 \parallel h_4, \quad 7 \parallel h_3, \quad 7 \parallel h_2, \quad 7 \parallel h_1, \quad 7 \nmid h_0.
\]
Consequently, we deduce that $7 \nmid m$, and therefore
\[
7 \parallel k_5, \quad 7 \parallel k_4, \quad 7 \parallel k_3, \quad 7 \parallel k_2, \quad 7 \parallel k_1, \quad 7 \nmid k_0.
\]
Hence, by (\ref{cond}), we conclude that $\alpha = 0$.\\
\noindent
Let $t \equiv 2 \pmod{7}$. In this case, using \textsc{Maple} we obtain that 
\begin{equation}\label{value}
\begin{aligned}
&v_7(E) = 2, \quad &v_7(F) = 1, \quad &v_7(G) = 2, \\
\quad &v_7(I) = 3, \quad &v_7(J) = 4, \quad &v_7(H) = 5.    
\end{aligned}    
\end{equation}
Hence, by (\ref{common}),
\[
7^3 \parallel h_5, \quad 7^4 \parallel h_4, \quad 7^5 \parallel h_3, \quad 7^6 \parallel h_2, \quad 7^7 \parallel h_1, \quad 7^7 \parallel h_0.
\]
From (\ref{dividem}) it follows that 
\[
7 \parallel m.
\]
Therefore, by (\ref{reduce}),
\[
7 \parallel k_5, \quad 7 \parallel k_4, \quad 7 \parallel k_3, \quad 7 \parallel k_2, \quad 7 \parallel k_1, \quad 7 \nmid k_0.
\]
Moreover, by (\ref{disc}) and a direct computation in \textsc{Maple}, we obtain 
\[
7^{36} \parallel \mathrm{disc}(g_t).
\]
Thus, by (\ref{cond}), we conclude that $\alpha = 2$. The proof is now complete by combining (\ref{maincond}), (\ref{cond}), (\ref{eq:product}), and (\ref{eq:alpha}).

\end{proof}

\section{Proofs}

This section provides a complete proof of our main theorems along with some relevant remarks. The following proposition is crucial for the proof of Theorem \ref{T1}.
\begin{prop}\label{2prop1}
    Let $t$ be any integer and $p>7$ be any prime, dividing $E(t)$. Then the congruence $p\equiv 1 \pmod7$ holds.
\end{prop}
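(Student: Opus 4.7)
The plan is to transfer the divisibility $p \mid E(t)$ to a statement about the local behaviour of $f_t^*(X)$ at $p$, and then apply a Newton polygon / Eisenstein-type analysis, combined with the fact that $K_t/\mathbb{Q}$ is Galois.

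The starting observation is that the identities \eqref{common} show $E(t)$ divides every coefficient $h_i(t)$ of $f_t^*(X)$, with quotients among $-21$, $-7F$, $-7G$, $-7I$, $-7J$, $-H$. Since $p>7$, the factor $7$ is a unit at $p$, and the identity \eqref{eq:bezout} from the proof of \thmref{T2} forces $p \nmid H(t)$ whenever $p \mid E(t)$. Consequently $v_p(h_0(t)) = v_p(E(t)) =: v$, while $v_p(h_i(t)) \geq v$ for $i = 1, \dots, 5$. In particular $f_t^*(X) \equiv X^7 \pmod p$, and the Newton polygon of $f_t^*$ at $p$ reduces to the single segment from $(0,v)$ to $(7,0)$ with slope $-v/7$.

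I would then distinguish two cases according to $v \bmod 7$. When $\gcd(v,7) = 1$, the standard Eisenstein-type irreducibility criterion attached to a single Newton segment of horizontal length $7$ and slope with denominator $7$ in lowest terms yields that $f_t^*$ is irreducible over $\mathbb{Q}_p$ and defines a totally ramified extension of degree $7$. Consequently $p$ is totally ramified in $K_t$ with $e_p = 7$, and since $p \neq 7$ the tame-ramification constraint $e_p \mid p - 1$ forces $p \equiv 1 \pmod 7$. When $v = 7k$ for some $k \geq 1$, the substitution $X = p^k Y$ produces a monic $\tilde f_t^*(Y) \in \mathbb{Z}_p[Y]$ whose reduction modulo $p$ is $Y^7 + c$ with $c \in \mathbb{F}_p^\times$. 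Suppose for contradiction that $p \not\equiv 1 \pmod 7$: then the seventh-power map is bijective on $\mathbb{F}_p^\times$, so $-c = a^7$ and $Y^7 + c$ factors in $\mathbb{F}_p[Y]$ as $(Y - a) \prod_j Q_j(Y)$ with each $Q_j$ irreducible of degree $f = \operatorname{ord}_7(p) > 1$. Since $Y^7 + c$ is separable for $p \neq 7$, these mod-$p$ factors are pairwise coprime, and Hensel's lemma lifts the factorization to $\mathbb{Q}_p$, producing local completions of $K_t$ above $p$ of mismatched degrees $1$ and $f > 1$. This contradicts $K_t/\mathbb{Q}$ being Galois, which forces all local degrees at $p$ to coincide; hence $p \equiv 1 \pmod 7$ in this case as well.

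The main technical hurdle will be phrasing the Galois contradiction in the second case precisely and checking the Hensel lift carefully; once this is in place, the first case is a textbook Newton polygon argument.
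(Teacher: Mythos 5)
Your proof is correct, but it follows a genuinely different route from the paper. The paper's argument is a direct application of the classical fact that a prime divisor of $\Phi_7(a)$ either equals $7$ or is $\equiv 1\pmod 7$: it exhibits an explicit $w(x)\in\mathbb{Q}[x]$ and $W(x)\in\mathbb{Z}[x]$ with $2^{6}7^{6}\,\Phi_7(w(x))=E(x)W(x)$, so that $p\mid E(t)$ with $p>7$ forces $\Phi_7(w(t))\equiv 0\pmod p$ and hence produces an element of multiplicative order $7$ in $\mathbb{F}_p^{\times}$. Your argument instead extracts the $p$-adic shape of $f_t^{*}$ from \eqref{common} and \eqref{eq:bezout} (both established in the proof of Theorem~\ref{T2} independently of this proposition, so there is no circularity: $p\nmid 2^{5}7^{7}$ gives $p\nmid H(t)$, whence $v_p(h_0)=v_p(E(t))=v$ and $v_p(h_i)\ge v$ for $1\le i\le 5$), and then runs a local analysis. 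Both branches of your dichotomy check out: for $7\nmid v$ the single Newton segment of slope $-v/7$ gives an irreducible, totally and tamely ramified local factor of degree $7$, and the constraint $e_p\mid p^{f_p}-1$ with $f_p=1$ (or simply abelian tame inertia) yields $p\equiv 1\pmod 7$; for $v=7k$ the rescaled polynomial reduces to $Y^7+c$ with $c\in\mathbb{F}_p^{\times}$, which for $p\not\equiv 1\pmod 7$ is separable with coprime irreducible factors of degrees $1$ and $\operatorname{ord}(p\bmod 7)>1$, and Hensel lifting produces local degrees above $p$ that disagree, contradicting the Galois property of $K_t$. The trade-off: the paper's proof is short, elementary, and self-contained once the identity \eqref{neww} is verified by computer algebra, and it reveals $E$ as a resolvent of $\Phi_7$; your proof needs more machinery (Newton polygons, Hensel, tame ramification) but requires no new computation beyond what Theorem~\ref{T2} already supplies, and it explains conceptually why the congruence must hold. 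The only point deserving more care in a final write-up is the Galois contradiction in the second case, exactly as you flag.
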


\begin{proof}
Let $\Phi_7(x)=x^6+x^5+x^4+x^3+x^2+x+1$ denote the $7$th cyclotomic polynomial, and define
\[
w(x) = \frac{x^5}{7} + \frac{3x^4}{14} + \frac{9x^3}{7} - \frac{11x^2}{14} + \frac{13x}{14}.
\]
A direct computation shows that there exists a polynomial $W(x) \in \mathbb Z[x]$ such that
\begin{equation}\label{neww}
\Phi_7(w(x)) = \frac{E(x)\, W(x)}{2^6 7^6},    
\end{equation}
where
\begin{align*}
W(x) &= 64 x^{24} + 448 x^{23} + 4016 x^{22} + 15104 x^{21} + 71180 x^{20} + 142780 x^{19} \\
&\quad + 480245 x^{18} + 258262 x^{17} + 1611259 x^{16} - 1664243 x^{15} + 6281164 x^{14} \\
&\quad - 9116196 x^{13} + 20556660 x^{12} - 28970654 x^{11} + 33948931 x^{10} - 21216648 x^9 \\
&\quad + 5083933 x^8 + 7056657 x^7 - 3702019 x^6 - 2324868 x^5 + 4160443 x^4 \\
&\quad + 671937 x^3 - 2012038 x^2 + 403368 x + 941192.
\end{align*}
Now, let $t\in \mathbb Z$ and suppose $p>7$ is a prime with $p \mid E(t)$.  
Since $p \not\in \{2,7\}$, the number $14$ is invertible modulo $p$, so $w(t)$ is well-defined modulo $p$.  
Reducing the above identity modulo $p$ and using $E(t)\equiv 0 \pmod p$, we obtain
\[
\Phi_7(w(t)) \equiv 0 \pmod p.
\]
Let $t_0 \in \mathbb Z$ be any integer with $t_0 \equiv w(t) \pmod p$. Then $\Phi_7(t_0) \equiv 0 \pmod p$.  
We claim that $t_0 \not\equiv 1 \pmod p$. Suppose, for contradiction, that $t_0 \equiv 1 \pmod p$. Then
\[
\Phi_7(t_0) \equiv \Phi_7(1) = 7 \not\equiv 0 \pmod p,
\]
which contradicts $p \mid \Phi_7(t_0)$. Hence $t_0 \not\equiv 1 \pmod p$. Finally, since $\Phi_7(t_0) \equiv 0 \pmod p$ and $t_0 \not\equiv 1 \pmod p$, we have
\[
t_0^7 \equiv 1 \pmod p
\]
and the multiplicative order of $t_0$ modulo $p$ is exactly $7$.  
Therefore, $7 \mid (p-1)$, which implies
\(
p \equiv 1 \pmod 7.
\)
This completes the proof.
\end{proof}

\begin{proof}[Proof of $\thmref{T1}$]
We consider the set $$\mathcal{S}=\{t \in \mathbb{Z}: E(t)\mbox{ is a fifth-power free integer} \}.$$  
For $t \in \mathcal{S}$, we have
 \begin{equation}\label{2e1}
   E(t)=7^{\alpha}AB^2C^3D^4.
 \end{equation}
 Here $\alpha=2$ if $t\equiv2 \pmod{7}$ and $\alpha=0$ otherwise, and $A, B, C$ and $D$ are natural numbers that are relatively prime and $7 \nmid ABCD$.  Note that for any integer $t$ we have $3\nmid E(t)$ and $5\nmid E(t)$. If $t$ is even, then $2^3||E(t)$ but $2\nmid f(K_t)$ and hence $2\nmid d(K_t)$.  Using \thmref{T2} and \propref{2prop1} we have 
\begin{align}
     f(K_t) &= \begin{cases}
     7^{\alpha}AB(C/2)D, & \text{if $t$ is even},\\
     7^{\alpha}ABCD,  & \text{if $t$ is odd},
     \end{cases}\\
     \text{and } \quad d(K_t) &= \begin{cases}
     \left(7^{\alpha}AB(C/2)D\right)^6, & \text{if $t$ is even},\\
     (7^{\alpha}ABCD)^6,  & \text{if $t$ is odd}.
     \end{cases} 
\end{align}
 Since $K_t / \mathbb{Q}$ is Galois and of degree $7$, for any prime $p$ the ramification index $e_p$ of $p$ in $K_t$ is given by
 \begin{align*}e_p=
 \begin{cases}
7, & \text{if } p\mid d(K_t),\\
1, & \text{otherwise}.
 \end{cases}
 \end{align*}
Thus, we obtain
\begin{equation}
    \prod_{p} e_p \;=\; 7^{\omega(d(K_t))} \;=\;
    \begin{cases}
    7^{\,\omega(E(t)) - 1}, & \text{if $t$ is even}, \\[6pt]
    7^{\,\omega(E(t))},     & \text{if $t$ is odd},
    \end{cases}
\end{equation}
Now from \propref{CHABERT} we get 
\begin{equation}\label{2pol}
    |\text{Po}(K_t)|=\begin{cases}
     7^{\omega(E(t))-2},\quad \;\text{ if $t$ is even}, \\
     7^{\omega(E(t))-1}, \quad\text{\; if $t$ is odd}. 
    \end{cases}
\end{equation}
We have $\text{Gal}(K_t/\mathbb{Q})\simeq \mathbb{Z}/7\mathbb{Z}$. Let $\sigma$ be a generator of $\text{Gal}(K_t/\mathbb{Q})$ and $[\mathfrak{I}]\neq [1]$ be an ambiguous ideal class in $K_t$. Then we have,
\begin{align*}
    [\mathfrak{I}]^7&=[\mathfrak{I}][\mathfrak{I}][\mathfrak{I}][\mathfrak{I}][\mathfrak{I}][\mathfrak{I}][\mathfrak{I}]\\    
    &=[\mathfrak{I}][\mathfrak{I}]^\sigma[\mathfrak{I}]^{\sigma^2}[\mathfrak{I}]^{\sigma^3}[\mathfrak{I}]^{\sigma^4}[\mathfrak{I}]^{\sigma^5}[\mathfrak{I}]^{\sigma^6}\\
    &=[\delta\mathcal{O}_{K_t}]\\
    &=[(1)]
\end{align*}
where $|\delta|=N(\mathfrak{I})=[\mathcal{O}_{K_t}: \mathfrak{I}]$ denotes the norm of the ideal $\mathfrak{I}$ and $[(1)]$ denotes the trivial ideal class of $\text{Cl}_{K_t}$. We conclude that the order of any non-trivial ambiguous ideal class in the class group of $K_t$ is $7$. From \eqref{amb} and the structure theorem for the abelian groups, we obtain $$\text{Po}(K_t)\cong\begin{cases}
\left(\frac{\mathbb{Z}}{7\mathbb{Z}}\right)^{\omega({E(t)})-2} & \text{if $t$ is even},\\
\left(\frac{\mathbb{Z}}{7\mathbb{Z}}\right)^{\omega({E(t)})-1} & \text{if $t$ is odd}.
\end{cases}$$
This finishes the proof of \thmref{T1}(1).

Let $t\equiv2\pmod{7}$ be an integer. Then, from equation (\ref{2e1}), we see that $E(t)$ can neither be a prime nor a prime power. Thus, for all such $t$, $K_t$ is a non-P\'olya field whenever $t \in \mathcal{S}$.
Now, we show that there are infinitely many such $t$'s. To do this, we show that there are infinitely many integers $s$ such that $E(7s+2)$ is fifth-power free. Expanding we obtain 
\begin{equation}
      E(7s+2)=7^2(2401s^6+4802s^5+4459s^4+2303s^3+686s^2+112s+8)=7^2\psi(s),
\end{equation}
where $\psi(x)=2401x^6+4802x^5+4459x^4+2303x^3+686x^2+112x+8$ and $7 \nmid \psi(s)$ for any integer $s$.
If $(ax+b)^5\mid\psi(x)$ for some $a,b\in\mathbb Z$ then $u=-b/a\in\mathbb Q$ is a root of multiplicity at least $5$, so
\[
\psi(u)=\psi'(u)=\psi''(u)=\psi^{(3)}(u)=\psi^{(4)}(u)=0.
\]
Equivalently, $(x-u)$ divides $\gcd(\psi,\psi^{(k)})$ for $k=1,2,3,4$. 
However, a direct computation shows
\[
\gcd(\psi,\psi')=\gcd(\psi,\psi'')=\gcd(\psi,\psi^{(3)})=\gcd(\psi,\psi^{(4)})=1
\]
in $\mathbb Z[x]$, so $\psi$ has no repeated roots. Hence no such linear factor $(ax+b)$ can occur such that $(ax+b)^5\mid\psi(x)$ for any integers $a,b$. Thus, from Theorem \ref{ero}, it follows that $\psi(s)$ is fifth-power free for infinitely many $s$. Since $7\nmid \psi(s)$ for any integer $s$, it follows that $E(7s+2)$ is fifth-power free for infinitely many integers $s$. To show the largeness of the size of the P\'olya groups, it is enough to find integers $t$ such that $E(7t+2)$ is fifth-power free and $\omega(E(t))$ goes to infinity as $t$ goes to infinity. Clearly $\psi(x)$ is irreducible in $\mathbb{Q}[x]$ and from Theorem \ref{Hel} for a positive proportion of primes $p$ we have $\psi(p)$ is fifth-power free. Consequently, $E(7p+2)$ is fifth-power free for a positive proportion of prime numbers $p$. Put
 $$\mathcal{S'}=\{p: \psi(p) \mbox{ is a fifth-power free integer} \}.$$
Then for $p\in\mathcal{S'}$, $\text{Po}(K_{7p+2})\cong (\mathbb{Z}/7\mathbb{Z})^{\omega(E({7p+2}))-1}.$ Since $\mathcal{S'}$ has positive proportion among primes, from \thmref{ham} it follows that the size of $\text{Po}(K_{7p+2})$ is unbounded and hence the 7-rank of their class groups $\text{Cl}(K_{7p+2})$ is unbounded. This completes the proof of \thmref{T1}(2).
\end{proof}

\begin{rmk}
From the proof of Theorem~\ref{T1}, it follows that if $E(7t+2)$ is fifth–power free, then the field $K_{7t+2}$ is non-P\'olya. The converse is not true in general, as there exist non-P\'olya fields with $E(7t+2)$ divisible by a fifth power. For instance, when $t=21342$, we have $7t+2 = 149396$ and $29^5$ divides $E(7\cdot 21342+2)$.
\end{rmk}

\begin{rmk}
It is conjectured \cite[Conjecture~1.1]{JEA} that for integers $d>1$ and $m>1$, the $m$-rank of the class group of a number field $K$ is unbounded as $K$ ranges over fields of degree $d$. This conjecture is known to hold when $m=d$, or more generally when $m\mid d$, by class field theory (see \cite[Conjecture~1.1]{JEA}). Eventually, \thmref{T1}(2) provides an alternative proof of the conjecture in the special case $m=d=7$.
\end{rmk}

\begin{rmk}\label{r3}
Let $t$ be odd (the case of even $t$ is analogous) and suppose that $E(t)$ is fifth–power free. Since $E(t)$ is irreducible over $\mathbb{Q}$, has a positive leading coefficient, and no fixed prime divisor, Bunyakovsky's conjecture predicts that $E(t)$ takes prime values for infinitely many integers $t$. Consequently, one expects that infinitely many members of the family $\{K_t\}_{t\in\mathbb{Z}}$ are P\'olya fields.  
\end{rmk}

\begin{proof}[Proof of $\thmref{T3}$] We have 
\[
E(t)=t^{6}+2t^{5}+11t^{4}+t^{3}+16t^{2}+4t+8 \in \mathbb Z[t].
\]
For even integers \(t=2k\), we define
\[
U(k):=\frac{E(2k)}{2^3}=8k^{6}+8k^{5}+22k^{4}+k^{3}+8k^{2}+k+1\in\mathbb Z[k],
\]
and for odd integers \(t=2k+1\), we define
\[
V(k):=E(2k+1)\in\mathbb Z[k].
\]
A direct reduction modulo \(2\) shows that \(U(k)\) is odd for every \(k\in\mathbb Z\). In particular, for all \(k\)
\begin{equation}\label{eq:omega-even}
\omega\bigl(E(2k)\bigr)=\omega\bigl(8U(k)\bigr)=\omega(U(k))+1.
\end{equation}
Note that both the polynomials \(U\) and \(V\) are nonconstant and primitive.  We already have the following description of the P\'olya group \(\operatorname{Po}(K_t)\) of the field \(K_t\) as
\begin{equation}\label{eq:polya-formula}
\operatorname{Po}(K_t)\cong
\begin{cases}
(\mathbb{Z}/7\mathbb{Z})^{\,\omega(E(t))-2}, & \text{ if $t$ is even},\\[4pt]
(\mathbb{Z}/7\mathbb{Z})^{\,\omega(E(t))-1}, & \text{ if $t$ is odd}.
\end{cases}
\end{equation}
Consequently, combining \eqref{eq:omega-even} with \eqref{eq:polya-formula} yields
\begin{equation}\label{eq:ranks}
\operatorname{rk}_7(\operatorname{Po}(K_{2k})) = \omega(U(k)) - 1,
\qquad
\operatorname{rk}_7(\operatorname{Po}(K_{2k+1})) = \omega(V(k)) - 1.
\end{equation}
Here, for any prime $p$, the $p$-rank of a finite abelian group $G$, denoted $\operatorname{rk}_p(G)$, is the largest integer $\ell$ such that $G$ contains a subgroup isomorphic to $(\mathbb{Z}/p\mathbb{Z})^\ell$. It is enough to construct infinitely many blocks starting at an even index (the case of odd starts is identical). So write the \(m\) consecutive integers as
\[
t=2k,\ 2k+1,\ 2k+2,\ \dots,\ 2k+(m-1)
\]
for some \(k\in\mathbb Z\). Define \(m\) polynomials in variable \(k\) by alternating \(U\) and \(V\) with the following shifts
\[
\begin{aligned}
H_0(k) &:= U(k), \\
H_1(k) &:= V(k), \\
H_2(k) &:= U(k+1), \\
H_3(k) &:= V(k+1), \\
&\ \ \vdots \\
H_{m-1}(k) &:=
\begin{cases}
U\!\bigl(k+\tfrac{m-1}{2}\bigr), \text{ if }\:\: m-1  \text{ is even},\\
V\!\bigl(k+\tfrac{m-2}{2}\bigr), \text{ if } \:\: m-1 \text{ is odd}.
\end{cases}
\end{aligned}
\]
Thus \(H_j(k)\) now corresponds exactly to the value \(E(2k+j)\), i.e., 
if \(j\) is even, \(H_j(k)=U(k+\tfrac{j}{2})\), and if \(j\) is odd,
\(H_j(k)=V(k+\tfrac{j-1}{2})\).
Note that each \(H_j\) is nonconstant and primitive. Now, in view of \eqref{eq:ranks}, we claim that
\begin{equation}\label{eq:target-omega}
\omega\bigl(H_j(k)\bigr)\ \ge\ r+1
\qquad\text{for all }j=0,1,\dots,m-1,
\end{equation}
for infinitely many \(k\). Note that if \eqref{eq:target-omega} holds, then
\(\operatorname{rank}_7\operatorname{Po}\bigl(K_{2k+j}\bigr)\ge r\) for each \(j\), completing the proof.

We recall Schur's lemma \cite[Lemma 3]{Murty2012}, which states that if $H(x) \in \mathbb{Z}[x]$ is nonconstant, then $H$ has infinitely many distinct prime divisors. We say that a prime \(p\) is a {prime divisor} of a polynomial \(H\in\mathbb{Z}[x]\) 
if there exists some \(n\in\mathbb{N}\) such that \(p \mid H(n)\)(see \cite{Murty2012}). By applying this result to each nonconstant polynomial $H_j$, we obtain that there exist infinitely many primes $p$ such that the congruence $H_j(x)\equiv 0 \pmod p$ has a solution. For each \(j\in\{0,\dots,m-1\}\),
we choose a set \(\mathcal P_j\) of \(r+1\) distinct primes with this property, and make these
sets pairwise disjoint.
\[
\mathcal P_j\cap\mathcal P_{j'}=\varnothing\qquad\text{whenever }j\ne j'.
\]
For each \(p\in\mathcal P_j\) choose a residue class \(\alpha_{p,j}\pmod p\) satisfying
\[
H_j(\alpha_{p,j})\equiv 0\pmod p.
\]
Now consider the system of congruences 
\begin{equation}\label{eq:CRT-system}
k\equiv \alpha_{p,j}\pmod p\qquad\text{for every }p\in\bigcup_{j=0}^{m-1}\mathcal P_j.
\end{equation}
All moduli are distinct primes, hence pairwise coprime. By the Chinese remainder theorem, there exists a solution to \eqref{eq:CRT-system}; moreover, the full set of solutions is the infinite arithmetic progression.
\[
k\equiv k_0 \pmod C,\qquad C:=\prod_{j=0}^{m-1}\ \prod_{p\in\mathcal P_j} p.
\]
For any \(k\) satisfying \eqref{eq:CRT-system} we have, for each fixed \(j\),
\[
p\mid H_j(k)\qquad\text{for all }p\in\mathcal P_j,
\]
so the set of distinct prime divisors of \(H_j(k)\) contains \(\mathcal P_j\). Hence
\[
\omega\bigl(H_j(k)\bigr)\ \ge\ |\mathcal P_j|\ =\ r+1
\qquad\text{for every }j=0,1,\dots,m-1,
\]
which is exactly \eqref{eq:target-omega}. This establishes the claim. Because the congruence class \(k\equiv k_0\pmod C\)
contains infinitely many integers \(k\), we obtain infinitely many blocks of
\(m\) consecutive indices \(2k,2k+1,\dots,2k+m-1\) for which all the P\'olya groups have
7--rank at least \(r\). This completes the proof.
\end{proof}

\section{P\'olya Numbers and Monogeneity }

The classical embedding problem in algebraic number theory asks whether every number field $K$ can be embedded in a field $L$ having class number $h_L = 1$. This fundamental question was resolved negatively in 1964 by Golod and Shafarevich \cite{ESG}, who constructed explicit counterexamples. In contrast, the analogous embedding problem for P\'{o}lya fields admits a positive answer. Leriche \cite{AL3} established that every number field can indeed be embedded in a P\'{o}lya field. The key insight underlying this result is Leriche's theorem demonstrating that the Hilbert class field $H_K$ of any number field $K$ is necessarily a P\'{o}lya field (\cite[Theorem 3.3]{AL3}). The P\'olya number of $K$ is given by the integer 
$$po_K=\text{min}\{[L : K]\mid K\subseteq L,\: L\text{ P\'olya field}\}.$$
The following theorem establishes an upper bound for the P\'olya numbers of cyclic number fields in terms of the size of their P\'olya groups. 
\begin{prop}\label{c1}\cite[Corollary 2.5.1]{thesis}
    Let $K$ be a cyclic number field of odd degree, then $po_{K}\leq |\text{Po}(K)|$.
\end{prop}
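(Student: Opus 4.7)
The plan is to exhibit an explicit P\'olya field $L$ containing $K$ with $[L:K]\le|\text{Po}(K)|$; the bound $po_K\le|\text{Po}(K)|$ then follows at once from the definition of the P\'olya number. The natural candidate is the \emph{genus field} of $K/\mathbb{Q}$, namely the maximal subfield $L$ of the Hilbert class field $H_K$ that is abelian over $\mathbb{Q}$. Since $L\subseteq H_K$, the extension $L/K$ is automatically unramified.

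First I would establish that $[L:K]=|\text{Po}(K)|$. By Zantema's description in equation~\eqref{amb}, $\text{Po}(K)$ is the subgroup of $\text{Cl}(K)$ generated by classes of ambiguous ideals; for $K/\mathbb{Q}$ cyclic of odd degree this coincides with the full ambiguous class group $\text{Cl}(K)^{\text{Gal}(K/\mathbb{Q})}$, because the $\pm 1$ unit obstruction present in the general ambiguous class number formula vanishes in odd degree. Classical genus theory then identifies this group with $\text{Gal}(L/K)$. Concretely, when $[K:\mathbb{Q}]=p$ is an odd prime and $s$ primes ramify in $K/\mathbb{Q}$, one realizes $L$ as the compositum of $K$ with the unique degree-$p$ subfield of $\mathbb{Q}(\zeta_q)$ (or of $\mathbb{Q}(\zeta_{p^2})$ when $q=p$) for each ramified prime $q$, yielding $[L:\mathbb{Q}]=p^s$ and $[L:K]=p^{s-1}$, which matches $|\text{Po}(K)|$ via Proposition~\ref{CHABERT}.

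The main step is then to show that $L$ itself is a P\'olya field. Here the abelian structure of $L/\mathbb{Q}$ is crucial: the ramified primes of $L$ are precisely those of $K$, each of ramification index exactly $p$ in $L$, so that $\prod_q e_q(L)=p^s=[L:\mathbb{Q}]$. Applying the abelian generalization of Chabert's formula, and noting that $L$ is totally real of odd degree so that $-1=N_{L/\mathbb{Q}}(-1)\in N(\mathcal{O}_L^{\times})$ and the unit factor is trivial, one obtains $|\text{Po}(L)|=\prod_q e_q(L)/[L:\mathbb{Q}]=1$. Thus $L$ is P\'olya, and $po_K\le[L:K]=|\text{Po}(K)|$, completing the argument.

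The principal obstacle will be the passage from cyclic to abelian extensions in Proposition~\ref{CHABERT} when computing $|\text{Po}(L)|$; this requires a careful analysis of the strongly ambiguous class group and of the unit norm index $[\mathbb{Z}^{\times}:N(\mathcal{O}_L^{\times})]$ for the exponent-$p$ abelian field $L$, which becomes tractable precisely because all degrees in sight are odd powers of $p$. A secondary subtlety is that when $[K:\mathbb{Q}]$ is odd but composite, the genus field construction must be refined inductively over the intermediate cyclic subextensions in order to account for ramification at primes dividing $[K:\mathbb{Q}]$.
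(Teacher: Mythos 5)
A preliminary remark: the paper does not prove Proposition~\ref{c1} at all; it is quoted from the author's thesis \cite[Corollary 2.5.1]{thesis}, so there is no in-paper argument to compare against. Your overall strategy --- exhibit the genus field $L$ of $K$ (the maximal subfield of $H_K$ abelian over $\mathbb{Q}$) as an explicit P\'olya field with $[L:K]=|\text{Po}(K)|$ --- is the natural and almost certainly the intended one, and your first step is essentially sound: for $K/\mathbb{Q}$ cyclic of odd degree the strongly ambiguous classes coincide with $\text{Cl}(K)^{G}$ because $-1=N(-1)$ kills the unit-index obstruction, and for a cyclic action the invariants and coinvariants of $\text{Cl}(K)$ have equal order, so the genus number equals $|\text{Po}(K)|$ as computed by Proposition~\ref{CHABERT}. (Strictly speaking genus theory identifies $\text{Gal}(L/K)$ with the coinvariants $\text{Cl}(K)/\text{Cl}(K)^{1-\sigma}$ rather than with $\text{Cl}(K)^{G}$, but only the equality of orders is needed.)

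The genuine gap is in the central step, where you assert that $L$ is P\'olya by ``applying the abelian generalization of Chabert's formula'' to get $|\text{Po}(L)|=\prod_q e_q(L)/[L:\mathbb{Q}]=1$. No such formula exists as a citable theorem: Proposition~\ref{CHABERT} is stated only for \emph{cyclic} extensions, and $L$ has Galois group $(\mathbb{Z}/p\mathbb{Z})^{s}$ (non-cyclic as soon as $s\ge 2$, which is exactly the case where the proposition has content). The correct general statement is Zantema's exact sequence $0\to H^1(G,\mathcal{O}_L^\times)\to\bigoplus_q \mathbb{Z}/e_q\mathbb{Z}\to\text{Po}(L)\to 0$, so what you actually need is $|H^1(\text{Gal}(L/\mathbb{Q}),\mathcal{O}_L^\times)|=[L:\mathbb{Q}]$; for non-cyclic abelian fields the naive quotient $\prod e_q/[L:\mathbb{Q}]$ is known to miss such unit correction factors (this is precisely the source of the $2^{s-2}$ versus $2^{s-3}$ dichotomy for biquadratic fields in \cite{JAI}). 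You flag this computation yourself as ``the principal obstacle'' but never carry it out, so the proof is incomplete exactly where it matters. A cleaner way to close the gap avoids cohomology entirely: $\text{Po}(L)$ is generated by the classes of $\Pi_q(L)=\prod_{\mathfrak{Q}\mid q}\mathfrak{Q}$ for ramified $q$; since all ramification of $q$ in $L/\mathbb{Q}$ occurs already in the degree-$p$ subfield $k_q$ of conductor a power of $q$, one has $\Pi_q(L)=\Pi_q(k_q)\mathcal{O}_L$, and $\Pi_q(k_q)$ is principal because $k_q$ is cyclic with a single ramified prime, hence P\'olya. This makes each generator trivial and shows $L$ is P\'olya. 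Finally, for odd \emph{composite} degree the same bookkeeping must be redone with ramification indices $e_q\mid n$ that need not equal $n$; your closing sentence acknowledges this but does not supply the inductive refinement, so that case too remains open in your write-up.
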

As an immediate corollary, we obtain the following result.
\begin{cor}
 Let $K_t$ denote the Hashimoto--Hoshi cyclic septic field. Then $po_{K_t}\leq|\text{Po}(K_t)|$.   
\end{cor}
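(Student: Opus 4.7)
The plan is to apply Proposition~\ref{c1} directly. The only thing to check is that $K_t$ falls within the scope of that proposition, namely that $K_t$ is a cyclic number field of odd degree. This is immediate from the setup: $K_t=\mathbb{Q}(\theta_t)$ where $\theta_t$ is a root of the degree--$7$ Hashimoto--Hoshi polynomial $f_t(X)$, and the family is constructed precisely so that $K_t/\mathbb{Q}$ is a cyclic extension with $[K_t:\mathbb{Q}]=7$. Since $7$ is an odd prime, the hypotheses of Proposition~\ref{c1} are satisfied.

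Therefore the proof consists of a single sentence invoking Proposition~\ref{c1} with $K=K_t$, yielding
\[
po_{K_t}\ \leq\ |\text{Po}(K_t)|.
\]
There is no genuine obstacle here; the corollary is a specialization of the general bound to the family under study. One could optionally remark that, combined with Theorem~\ref{T1}(1), this gives the explicit bound
\[
po_{K_t}\ \leq\
\begin{cases}
7^{\,\omega(E(t))-2}, & \text{if $t$ is even},\\[4pt]
7^{\,\omega(E(t))-1}, & \text{if $t$ is odd},
\end{cases}
\]
whenever $E(t)$ is fifth--power free, thereby making the corollary quantitatively meaningful for the Hashimoto--Hoshi family. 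No further argument is required.
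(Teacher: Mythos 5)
Your proposal is correct and matches the paper exactly: the corollary is stated there as an immediate consequence of Proposition~\ref{c1}, with no further argument given, since $K_t$ is cyclic of odd degree $7$. The optional quantitative remark combining it with Theorem~\ref{T1}(1) is a reasonable addition but not needed.
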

Let $K$ be a number field and $\theta \in \mathcal{O}_K$ a primitive element of $K$. The index of $\theta$ in $K$ is defined as $I(\theta) = [\mathcal{O}_K : \mathbb{Z}[\theta]]$ and the index of $K$ is defined as 
\[
I(K) = \gcd\{I(\theta) : \theta \in \mathcal{O}_K \text{ with } K = \mathbb{Q}(\theta)\}.
\]
When $I(K) > 1$, the field $K$ cannot be monogenic, that is, $\mathcal{O}_K \neq \mathbb{Z}[\theta]$ for every $\theta \in K$. However, the converse does not hold in general. For instance, this occurs for the Lehmer quintic fields (see \cite{NKM}). In this direction, we prove the following theorem.
\begin{thm}\label{N100}
Let $t$ be an odd integer coprime to 15, and let $K_t$ denote the corresponding Hashimoto--Hoshi cyclic septic field. Then $K_t$ is non-monogenic with index $I(K_t)=1$.
\end{thm}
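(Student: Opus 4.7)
The plan splits into two independent parts, of very different difficulty. The non-monogenicity of $K_t$ is immediate from \propref{GRAS}: a cyclic number field of prime degree $p \geq 5$ can be monogenic only when $2p+1$ is prime, and for $p = 7$ one has $2p+1 = 15 = 3\cdot 5$, which is composite. Hence no member of the family $\{K_t\}$ is monogenic, regardless of $t$, and the hypothesis of the theorem is not used here. The real work lies in establishing $I(K_t) = 1$.

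To prove $I(K_t) = 1$, I will show that no rational prime is a common index divisor of $K_t$. Since $K_t/\mathbb{Q}$ is cyclic of prime degree $7$, any unramified prime $p$ decomposes in $K_t$ as either totally split (seven primes of residue degree one) or inert (one prime of residue degree seven). The Hensel--Ore--Engstrom criterion says that $p$ is a common index divisor exactly when, for some residue degree $f$, the number of primes of $\mathcal{O}_{K_t}$ above $p$ with that residue degree strictly exceeds the number of monic irreducibles of degree $f$ in $\mathbb{F}_p[X]$. The inert case trivially fails this test, and the totally split case satisfies it precisely when $7 > p$, that is, for $p \in \{2,3,5\}$. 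By \thmref{T2} the conductor $f(K_t)$ is supported only on $7$ and primes $\equiv 1 \pmod 7$, so $2$, $3$, and $5$ are always unramified in $K_t$. The problem therefore reduces to showing that each of $2$, $3$, $5$ is \emph{not} totally split in $K_t$.

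To rule out complete splitting at such a prime $p$, I will use the elementary observation that if $p$ splits completely in $K_t = \mathbb{Q}(\theta_t)$, then the image of $\theta_t$ in each of the seven residue fields $\mathbb{F}_p$ gives a root of $f_t(X) \bmod p$ lying in $\mathbb{F}_p$. It therefore suffices to verify, for each residue class of $t$ allowed by the hypothesis, that $f_t(X) \bmod p$ has no root in $\mathbb{F}_p$. For $p = 2$ and $t$ odd, the seven coefficients $a_i(t)$ reduce uniformly to give
\[
f_t(X) \equiv X^{7} + X^{6} + X^{4} + X + 1 \pmod{2},
\]
which evaluates to $1$ at both elements of $\mathbb{F}_2$ (a further check shows no irreducible factor of degree $\leq 3$, so this polynomial is actually irreducible, independently confirming that $2$ is inert). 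For $p = 3$ the hypothesis $3 \nmid t$ leaves the two residues $t \equiv 1, 2 \pmod 3$, and for $p = 5$ the hypothesis $5 \nmid t$ leaves the four residues $t \equiv 1, 2, 3, 4 \pmod 5$. In each of these six cases I will reduce the seven $a_i(t)$ modulo $p$ to obtain an explicit septic in $\mathbb{F}_p[X]$ and then evaluate it at every element of $\mathbb{F}_p$, verifying that none of the values is zero.

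The main obstacle is not conceptual but purely computational: one has to carry out the $1 + 2 + 4 = 7$ mod-$p$ reductions and root searches described above without arithmetic slips. Each individual case is short but error-prone by hand, so I would cross-verify with \textsc{Maple} in the same spirit as the discriminant computations of Section~3. Once these seven non-vanishing checks are in place, the reductions above close the argument, and both assertions of \thmref{N100} follow immediately.
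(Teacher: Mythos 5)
Your proof is correct, but it reaches $I(K_t)=1$ by a genuinely different route from the paper's. The non-monogenicity half is identical: both arguments invoke \propref{GRAS} and the compositeness of $2\cdot 7+1=15$. For the index, the paper works with the specific generator $\theta_t$: it computes
$\operatorname{disc}(f_t)=t^{22}\,(E(t))^{6}\,(2t^{4}-2t^{3}+6t^{2}-3t+4)^{2}\,(t^{5}+t^{4}+t^{3}+2t^{2}+t+1)^{2}$
via \textsc{Maple}, checks that each factor is a unit modulo $2$, $3$ and $5$ under the hypotheses on $t$, and concludes from $\operatorname{disc}(f_t)=I(\theta_t)^{2}\,d(K_t)$ that $I(\theta_t)$ --- and hence $I(K_t)$, whose prime divisors must be strictly less than $7$ by von Zylinski \cite{ZAL} --- is coprime to $30$. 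You instead argue at the level of splitting types: you invoke the Hensel/Engstr\"om characterization of common index divisors, observe that in a cyclic degree-$7$ field an unramified prime is either inert or totally split and that only $p\in\{2,3,5\}$ totally split could violate the counting condition, note that these primes are unramified by \thmref{T2}, and rule out total splitting by checking that $f_t \bmod p$ has no root in $\mathbb{F}_p$ for each admissible residue class of $t$. I spot-checked several of your reductions (e.g.\ $f_t\equiv X^{7}+X^{6}+X^{4}+X+1 \pmod 2$ for odd $t$, which is in fact irreducible over $\mathbb{F}_2$, and the cases modulo $3$ and $5$) and they are correct, so the argument closes. The trade-off: your approach imports an external tool (Hensel's criterion) that the paper never cites, but its computations are light mod-$p$ evaluations; the paper's approach is more self-contained given that $\operatorname{disc}(f_t)$ must be computed anyway, and it proves the marginally stronger fact that the single generator $\theta_t$ already has index prime to $30$, whereas your argument only controls the gcd over all generators.
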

\begin{proof}
It is known that $\mathrm{Gal}(K_t/\mathbb{Q}) \simeq \mathbb{Z}/7\mathbb{Z}$. Using \thmref{GRAS}, a cyclic number field of prime degree $\ell \geq 5$ can be monogenic only if $2\ell+1$ is prime and the field is the maximal real subfield of $\mathbb{Q}(\zeta_{2\ell+1})$. For $\ell=7$, this would require $2\ell+1=15$ to be prime, which is absurd. Therefore, $K_t$ is not monogenic.
We now prove that $I(K_t)=1$ whenever $t$ is odd and coprime to $15$.
Recall the fundamental relation.
\[
\operatorname{disc}(f_t) \;=\; [I(\theta_t)]^2 \, d(K_t).
\]
Using \textsc{Maple}, we compute explicitly
\begin{align*}
\operatorname{disc}(f_t) 
&= t^{22}\,
\bigl(t^{6}+2t^{5}+11t^{4}+t^{3}+16t^{2}+4t+8\bigr)^{6} \\
&\quad \times \bigl(2t^{4}-2t^{3}+6t^{2}-3t+4\bigr)^{2}\,
\bigl(t^{5}+t^{4}+t^{3}+2t^{2}+t+1\bigr)^{2} \\
&= t^{22}\,(E(t))^6\,
\bigl(2t^{4}-2t^{3}+6t^{2}-3t+4\bigr)^{2}\,
\bigl(t^{5}+t^{4}+t^{3}+2t^{2}+t+1\bigr)^{2}.
\end{align*}
Now suppose $t$ is odd and $\gcd(t,15)=1$.  
A direct check modulo $2$, $3$, and $5$ shows that none of the three polynomial factors
\[
E(t), \quad 2t^{4}-2t^{3}+6t^{2}-3t+4, \quad 
t^{5}+t^{4}+t^{3}+2t^{2}+t+1
\]
vanishes modulo $2$, $3$, or $5$. Hence, $\operatorname{disc}(f_t)$ is not divisible by $2$, $3$, or $5$. Since the index $I(K_t)$ can only have prime divisors strictly less than the degree $[K_t:\mathbb{Q}]=7$ (see \cite{ZAL}), we conclude that $I(K_t)=1$.
\end{proof}

\section{Concluding Remarks}

In this article, we have provided a complete characterization of the P\'olya property for the 
Hashimoto--Hoshi family of cyclic septic fields. We showed that the structure of their P\'olya groups is governed entirely by the prime factorization of the values of the polynomial
\[
E(t) = t^6 + 2t^5 + 11t^4 + t^3 + 16t^2 + 4t + 8
\]
at integers $t$, and that there are infinitely many non-P\'olya fields whose P\'olya groups can be made arbitrarily large. Furthermore, we established the existence of arbitrarily long blocks of consecutive fields whose P\'olya groups are simultaneously large.  However at present, in the view of \rmkref{r3}, the infinitude of P\'olya fields in this family remains conditional.

To analyze this further, it is natural to study the Diophantine equations arising from perfect powers of $E(t)$. This leads us to consider the superelliptic curves
\[
C_n : Y^n = E(X), \qquad n=2,3,4.
\]
Since $E(X)$ has degree $6$, an application of the Riemann--Hurwitz formula (see \cite[Theorem~4.16]{Miranda1995}) shows that these curves have genus $2$, $4$, and $7$ for $n=2,3$, and $4$, respectively. As these genera all exceed $1$, Faltings' theorem \cite[Theorem~E.0.1]{FALT} implies that each of the above equations admits only finitely many integral solutions. In particular, there exist only finitely many integers $t$ such that
\[
E(t) = p^2, \quad E(t) = p^3, \quad \text{or} \quad E(t) = p^4,
\]
for some prime $p$. Consequently, under the assumption that $E(t)$ is fifth--power free, the problem of determining whether this family produces infinitely many P\'olya fields is directly linked to a deep open problem concerning prime values of polynomials. This leads naturally to the following question, which is of considerable significance. 

\noindent\textbf{Question.} 
Is it possible to prove unconditionally that infinitely many Hashimoto--Hoshi fields under the assumption that $E(t)$ fifth--power free are P\'olya?

A positive answer to this question would immediately settle Bunyakovsky’s conjecture for the polynomial $E(t)$. It is worth noting that, to date, this conjecture is not known for any single polynomial of degree greater than one.

\appendix
\section*{Appendix: \textsc{Maple} Computations}

\subsection*{A. Tschirnhaus Transformation and Discriminant}
The \textsc{Maple} script below computes the Tschirnhaus-transformed polynomial $f^*_t$, displays its coefficients in factorized form as in~\eqref{common}, and evaluates the discriminant of $g_t$ given in~\eqref{disc}.
\begin{verbatim}
a6 := -t^3 - t^2 - 5*t - 6:
a5 := 3*(3*t^3 + 3*t^2 + 8*t + 4):
a4 := t^7 + t^6 + 9*t^5 - 5*t^4 - 15*t^3 - 22*t^2 - 36*t - 8:
a3 := -t*(t^7 + 5*t^6 + 12*t^5 + 24*t^4 - 6*t^3 + 2*t^2 - 20*t - 16):
a2 := t^2*(2*t^6 + 7*t^5 + 19*t^4 + 14*t^3 + 2*t^2 + 8*t - 8):
a1 := -t^4*(t^4 + 4*t^3 + 8*t^2 + 4):
a0 := t^7:
ft := expand(X^7 + a6*X^6 + a5*X^5 + a4*X^4 + a3*X^3 + a2*X^2 
             + a1*X + a0):
c := t^3 + t^2 + 5*t + 6:
fstar_raw := subs(X = (X + c)/7, ft):
fstar := expand(denom(fstar_raw) * fstar_raw);
h5 := coeff(fstar, X, 5):
h4 := coeff(fstar, X, 4):
h3 := coeff(fstar, X, 3):
h2 := coeff(fstar, X, 2):
h1 := coeff(fstar, X, 1):
h0 := coeff(fstar, X, 0):
for k from 5 to 0 by -1 do
    hk := parse(cat("h", k));
    printf("h%d = %a\n", k, hk);
    printf("factor(h%d) = %a\n\n", k, factor(hk));
end do:
gt := expand(subs(X = m*X,
                  X^7 + h5*X^5 + h4*X^4 + h3*X^3 + h2*X^2 + h1*X 
                  + h0)/m^5):
disc_gt := discrim(gt, X):
disc_factored := factor(disc_gt):
printf("g_t(X) = %a\n", gt);
printf("\n");
printf("Discriminant of g_t(X) (factored) = %a\n", disc_factored);
\end{verbatim}
\subsection*{B. Verification of the Relation in \texorpdfstring{\eqref{eq:bezout}}{(eq. bezout)}}
The following \textsc{Maple} script checks the explicit relation between $E$ and $H$ that appears in \eqref{eq:bezout}.
\begin{verbatim}
E := t^6 + 2*t^5 + 11*t^4 + t^3 + 16*t^2 + 4*t + 8:
H := 6*t^15 + 30*t^14 - 133*t^13 - 504*t^12
     - 3255*t^11 - 6244*t^10 - 12033*t^9 - 8438*t^8
     + 19620*t^7 + 52892*t^6 + 136787*t^5
     + 167671*t^4 + 179676*t^3 + 206640*t^2
     + 103680*t + 82944:
L := -150*t^14 - 492*t^13 + 5035*t^12 + 7709*t^11 + 43169*t^10
     + 6075*t^9 - 61261*t^8 - 3409*t^7 - 53980*t^6 + 1520256*t^5
     + 1587001*t^4 - 1436918*t^3 - 5167741*t^2 - 16768814*t 
     - 14725412:
M := 25*t^5 + 7*t^4 + 119*t^3 - 376*t^2 + 155*t + 1738:
g := gcd(E, H):
printf("gcd(E,H) = %a\n", g);
out := expand(E*L + H*M):
printf("E*L + H*M = %a\n", out);
printf("Prime factorization = %a\n", ifactor(out));
\end{verbatim}
\subsection*{C. 7-adic Valuations}
The next \textsc{Maple} script computes the $7$-adic valuations of the polynomials in~\eqref{value} for the residue class $t \equiv 2 \pmod{7}$.
\begin{verbatim}
P_E := t -> t^6 + 2*t^5 + 11*t^4 + t^3 + 16*t^2 + 4*t + 8:
P_F := t -> 10*t^3 + 10*t^2 + t + 4:
P_G := t -> 15*t^6 + 30*t^5 - 31*t^4 + 15*t^3 - 201*t^2 - 87*t - 174:
P_H := t -> 6*t^15 + 30*t^14 - 133*t^13 - 504*t^12 - 3255*t^11
            - 6244*t^10 - 12033*t^9 - 8438*t^8 + 19620*t^7 
            + 52892*t^6 + 136787*t^5 + 167671*t^4 + 179676*t^3
            + 206640*t^2+ 103680*t + 82944:
P_I := t -> 12*t^9 + 36*t^8 - 78*t^7 - 84*t^6 - 861*t^5 - 588*t^4
            - 1155*t^3 - 1214*t^2 - 324*t - 432:
P_J := t -> 5*t^12 + 20*t^11 - 66*t^10 - 162*t^9 - 1126*t^8
            - 1441*t^7 - 2534*t^6 - 1641*t^5 + 1857*t^4 
            + 426*t^3 + 5574*t^2 + 3456*t + 3456:
polys := [P_E, P_F, P_G, P_H, P_I, P_J]:
names := ["E", "F", "G", "H", "I", "J"]:
resid := 2:
for idx to nops(polys) do
    P := polys[idx]:
    pname := names[idx]:
    P_at_res := eval(P(resid)):
    if P_at_res = 0 then
        printf("%s: P(%d) = 0 => 7-adic valuation must be computed
        from coefficients (vanishes at t=%d).\n", pname, resid,
        resid):
        next;
    end if;
    v_const := 0:
    tmp := P_at_res:
    while tmp mod 7 = 0 do
        tmp := tmp/7:
        v_const := v_const + 1:
    end do:
    if v_const = 0 then
        printf("%s: v7(P(%d)) = 0. Not divisible by 7 for
        t &equiv; %d (mod 7).\n", pname, resid, resid):
        next;
    end if;
    found_val := 0:
    for k to v_const do
        modulus := 7^k:
        all_zero := true:
        for m from 0 to 7^(k - 1) - 1 do
            r := resid + 7*m:
            if eval(P(r)) mod modulus <> 0 then
                all_zero := false:
                break:
            end if:
        end do;
        if all_zero then
            found_val := k:
        else
            break:
        end if:
    end do:
    printf("The 7-adic valuation of %s for t congruent to 2 modulo
    7 is %d\n", pname, found_val):
end do:
\end{verbatim}

\subsection*{D. Verification of the Polynomial Identity in (\ref{neww})}
The following \textsc{Maple} script can be used to verify the polynomial identity in (\ref{neww}). Evaluating the last line in \textsc{Maple} returns `0', confirming that the identity is satisfied.
\begin{verbatim}
Phi7 := x -> x^6 + x^5 + x^4 + x^3 + x^2 + x + 1;
w := x -> (2*x^5 + 3*x^4 + 18*x^3 - 11*x^2 + 13*x)/14;
Ex := x -> x^6 + 2*x^5 + 11*x^4 + x^3 + 16*x^2 + 4*x + 8;
W := x -> 64*x^24 + 448*x^23 + 4016*x^22 + 15104*x^21 + 71180*x^20 
          + 142780*x^19 + 480245*x^18 + 258262*x^17 + 1611259*x^16 
          - 1664243*x^15 + 6281164*x^14 - 9116196*x^13 
          + 20556660*x^12 - 28970654*x^11 + 33948931*x^10
          - 21216648*x^9 + 5083933*x^8 + 7056657*x^7 - 3702019*x^6
          - 2324868*x^5 + 4160443*x^4 + 671937*x^3 - 2012038*x^2 
          + 403368*x + 941192;
simplify(Phi7(w(x)) - (Ex(x) * W(x))/(2^6 * 7^6));
\end{verbatim}

\section*{Acknowledgements}
The author expresses sincere gratitude to Dr. Srilakshmi Krishnamoorthy for her constant encouragement and guidance during this project. Special thanks are also extended to Dr. Mahesh Kumar Ram for his valuable suggestions. The author also acknowledges the Indian Institute of Science Education and Research Thiruvananthapuram for providing financial support and an excellent research environment to carry out this work.

\section*{Data availability}
Data sharing is not applicable to this article as no datasets were generated or analysed during the current study.

\end{document}